\newcommand{\udots}{\mathinner{\mskip1mu\raise1pt\vbox{\kern7pt\hbox{.}}
\mskip2mu\raise4pt\hbox{.}\mskip2mu\raise7pt\hbox{.}\mskip1mu}}
\newcommand{\SG}{{\mathcal{G}}}
\newcommand{\SH}{{\mathcal{H}}}
\newcommand{\SI}{{\mathcal{I}}}
\newcommand{\SM}{{\mathcal{M}}}
\newcommand{\SN}{{\mathcal{N}}}
\newcommand{\SO}{{\mathcal{O}}}
\newcommand{\SP}{{\mathcal{P}}}
\newcommand{\PP}{\mathbb{P}}
\newcommand{\ZZ}{\mathbb{Z}}
\newcommand{\CC}{\mathbb{C}}
\newcommand{\RR}{\mathbb{R}}
\newcommand{\Gr}{\operatorname{Gr}}
\newcommand{\Spec}{\operatorname{Spec}}
\newcommand{\Hom}{\operatorname{Hom}}
\newcommand{\id}{\operatorname{Id}}
\newcommand{\rk}{\operatorname{rk}}
\newcommand{\pdeg}{\operatorname{pardeg}}
\newcommand{\SParEnd}{\operatorname{SParEnd}}
\newcommand{\ParEnd}{\operatorname{ParEnd}}
\newcommand{\tr}{\operatorname{tr}}
\newcommand{\GL}{\operatorname{GL}}
\newcommand{\gitq}{/\!\!/}
\newcommand{\sP}{\operatorname{s-par}}
\newcommand{\op}{\operatorname}
\newtheorem{proposition}{Proposition}[section]
\newtheorem{theorem}[proposition]{Theorem}
\newtheorem{definition}[proposition]{Definition}
\newtheorem{lemma}[proposition]{Lemma}
\newtheorem{corollary}[proposition]{Corollary}
\numberwithin{equation}{section}
\title[Torelli parabolic Delienge-Hitchin]{Torelli theorem for the parabolic Deligne-Hitchin moduli space}
\author[D. Alfaya]{David Alfaya}
\author[T. G\'omez]{Tom\'as L. G\'omez}
\date{}
\address{Instituto de Ciencias Matem\'aticas (CSIC-UAM-UC3M-UCM),
Nicol\'as Cabrera 15, Campus Cantoblanco UAM, 28049 Madrid, Spain}
\email{david.alfaya@icmat.es}
\address{Instituto de Ciencias Matem\'aticas (CSIC-UAM-UC3M-UCM),
Nicol\'as Cabrera 15, Campus Cantoblanco UAM, 28049 Madrid, Spain}
\email{tomas.gomez@icmat.es}
\keywords{Vector bundle, moduli space, parabolic Deligne-Hitchin, parabolic Higgs bundle, Torelli theorem}
\subjclass[2010]{14D20, 14C34}
\begin{document}

\begin{abstract}
We prove that, given the isomorphism class of the parabolic Deligne-Hitchin moduli space over a smooth projective curve, we can recover the isomorphism class of the curve and the parabolic points.
\end{abstract}

\maketitle

\section{Introduction}
\label{section:Intro}

Let $X$ be a compact connected Riemann surface of genus $g\ge 3$. The construction of the Deligne-Hitchin moduli space associated to $X$, $\SM_{\op{DH}}(X,r)$, is due to Deligne \cite{De}. In \cite{HitchinHiggs}, Hitchin built the twistor space for the hyper-K\"ahler structure of the moduli space $\SM_{\op{Higgs}}(X,r,\SO_X)$ and Simpson proved that this twistor space can be identified with the complex analytic space $\SM_{\op{DH}}(X,r)$ (see \cite[page 8]{Si2}).

In this paper we present a generalization of the Torelli theorem for the Deligne-Hitchin moduli space of a compact curve given in \cite{BGHL}. We will use the formalism of parabolic vector bundles in order to extend this theorem to punctured Riemann surfaces.

This paper is organized as follows. First, in Section \ref{section:ParVB}, we will deal with parabolic structures and provide some properties of the moduli space of parabolic vector bundles. Using them, in Section \ref{section:ParHiggsB}, we will be able to give an alternative proof to the Torelli theorem for the moduli space of parabolic Higgs bundles. This proof is slightly different from the one provided by \cite{GL} and adapts the techniques used in \cite{BGHL} to the parabolic case.

Parabolic $\lambda$-connections will be described in Section \ref{section:ParLambdaConn}, and we will consider a parabolic version of the Hodge moduli space for a punctured Riemann surface. Extending the techniques used in \cite{BGHL} and \cite{BGH2}, a Torelli theorem for the parabolic Hodge moduli space will be proven.

Finally, in Section \ref{section:ParDH}, we will use a parabolic version of the Riemann-Hilbert correspondence to construct the parabolic Deligne-Hitchin moduli space. The main result of this work is a Torelli theorem for this space (see Sections \ref{section:ParVB} and \ref{section:ParDH} for definitions and Theorem \ref{teo:TorelliParabolicDH}).

\begin{theorem}
Let $r=2$. Let $D$ be a set of $n\ge 1$ different points over a smooth complex projective curve $X$ of genus $g\ge 3$ and let $\alpha$ be a concentrated generic (in particular full flag) system of weights over $D$ such that for every $x\in D$,
\begin{equation}
\beta(x):=\sum_{i=1}^r \alpha_i(x) \in \ZZ
\end{equation}
and $\sum_{x\in D} \beta(x)$ is coprime with $r$. The isomorphism class of the complex analytic space $\SM_{\op{DH}}(X,r,\alpha)$ determines uniquely the isomorphism class of the unordered pair of punctured Riemann surfaces $\{(X,D),(\overline{X},D)\}$.
\end{theorem}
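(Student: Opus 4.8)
The plan is to bootstrap off the Torelli theorem for the parabolic Hodge moduli space established in Section~\ref{section:ParLambdaConn}, using the description of $\SM_{\op{DH}}(X,r,\alpha)$ as a fibration over $\PP^1$. By construction (Section~\ref{section:ParDH}) the parabolic Deligne-Hitchin moduli space carries a holomorphic projection $\pi\colon \SM_{\op{DH}}(X,r,\alpha)\to\PP^1$, obtained by gluing, via the parabolic Riemann-Hilbert correspondence, the parabolic Hodge moduli space $\SM_{\op{Hod}}(X,r,\alpha)$ lying over $\PP^1\setminus\{\infty\}$ to the parabolic Hodge moduli space $\SM_{\op{Hod}}(\ol X,r,\alpha)$ of the conjugate curve lying over $\PP^1\setminus\{0\}$. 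The fibres over $0$ and $\infty$ are the parabolic Higgs moduli spaces $\SM_{\op{Higgs}}(X,r,\alpha)$ and $\SM_{\op{Higgs}}(\ol X,r,\alpha)$, while a fibre over $\lambda\neq 0,\infty$ is a moduli space of parabolic $\lambda$-connections. Thus, once $\pi$ and its two distinguished fibres are recovered purely from the complex analytic isomorphism type of the total space, we obtain the two parabolic Hodge charts as $\pi$-preimages of suitable neighbourhoods, and the parabolic Hodge Torelli theorem recovers $(X,D)$ and $(\ol X,D)$.

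The first and main step is therefore the intrinsic reconstruction of the fibration $\pi$, for which I would adapt the strategy of \cite{BGHL} to the parabolic setting. I would begin by separating the two special fibres from the generic ones. A fibre over $\lambda\neq 0,\infty$ is analytically isomorphic, via Riemann-Hilbert, to the parabolic character variety with fixed local monodromy, which is affine and hence contains no positive-dimensional compact complex subvariety. By contrast, each special fibre is a parabolic Higgs moduli space, which admits a proper parabolic Hitchin map to an affine base whose fibre over $0$, the parabolic nilpotent cone, is a complete subvariety of positive dimension. Consequently the set $\{0,\infty\}\subset\PP^1$ is characterized intrinsically as the locus of $\lambda$ for which $\pi^{-1}(\lambda)$ contains a positive-dimensional compact analytic subspace. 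To promote this to a reconstruction of the whole map $\pi$, I would use the canonical $\CC^*$-action on $\SM_{\op{DH}}(X,r,\alpha)$ lifting the standard action on $\PP^1$ (simultaneously scaling $\lambda$ and the $\lambda$-connection): its fixed-point locus lies in the two special fibres and consists of parabolic systems of Hodge bundles, and its attracting and repelling flows organize the total space into the fibres of $\pi$. Any isomorphism of two such analytic spaces must carry compact subvarieties to compact subvarieties and the orbit structure of this action to itself, hence must carry $\pi$ to the corresponding projection up to an automorphism of $\PP^1$ preserving $\{0,\infty\}$.

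With $\pi$ and $\{0,\infty\}$ recovered, the preimage $\pi^{-1}(\PP^1\setminus\{\infty\})$ is exactly the parabolic Hodge chart $\SM_{\op{Hod}}(X,r,\alpha)$, and applying the Torelli theorem of Section~\ref{section:ParLambdaConn} to it recovers the isomorphism class of $(X,D)$; symmetrically, $\pi^{-1}(\PP^1\setminus\{0\})$ is $\SM_{\op{Hod}}(\ol X,r,\alpha)$ and recovers $(\ol X,D)$. Because the automorphism of $\PP^1$ exchanging $0$ and $\infty$ induces an isomorphism $\SM_{\op{DH}}(X,r,\alpha)\cong\SM_{\op{DH}}(\ol X,r,\alpha)$, and because the compact-subvariety criterion above provides no intrinsic way to distinguish $0$ from $\infty$, the data we can extract is precisely the unordered pair $\{(X,D),(\ol X,D)\}$, which is the assertion of the theorem.

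The principal obstacle is the intrinsic reconstruction of $\pi$ itself: one must verify not merely that the fibration exists, but that every abstract analytic isomorphism necessarily respects it. This requires checking that the compact-subvariety criterion genuinely singles out the two special fibres, which rests on the affineness of the parabolic $\lambda$-connection fibres together with the completeness and positive-dimensionality of the parabolic nilpotent cone under the coprimality and concentrated-generic (full-flag) hypotheses that guarantee smoothness; and it requires ruling out any anomalous compact subvarieties or extraneous $\CC^*$-actions that could yield a competing $\PP^1$-fibration. It is here that the conditions $r=2$, $g\ge 3$, the integrality of $\beta(x)$, and the genericity of $\alpha$ are used, both to ensure smoothness of all the relevant moduli spaces and to reduce the comparison to the non-parabolic analysis of \cite{BGHL}.
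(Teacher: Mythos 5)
Your proposal takes a genuinely different route from the paper, and the route has a gap at exactly the point you flag as ``the principal obstacle'': the intrinsic reconstruction of the fibration $\op{pr}:\SM_{\op{DH}}(X,r,\alpha)\to\CC\PP^1$ is never actually carried out, and the two mechanisms you propose for it do not work as stated. First, an abstract biholomorphism between two Deligne--Hitchin spaces transports the canonical $\CC^*$-action to \emph{some} $\CC^*$-action on the target, not necessarily the canonical one, so the claim that any isomorphism ``must carry the orbit structure of this action to itself'' is unjustified; note also that this action does not preserve the fibres over $\lambda\in\CC^*$ (it maps the fibre over $\lambda$ to the fibre over $t\lambda$), so its orbits do not organize the total space into the fibres of $\op{pr}$. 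Second, the compact-subvariety criterion is circular: it distinguishes the fibres over $0$ and $\infty$ \emph{among the fibres of $\op{pr}$}, i.e., only after $\op{pr}$ is known. It cannot be promoted to an intrinsic characterization of $\op{pr}^{-1}(\{0,\infty\})$ as, say, the union of positive-dimensional compact analytic subsets of the total space, because $\SM_{\op{DH}}$ is (an analogue of) the twistor space of the hyper-K\"ahler Higgs moduli space: through every point there pass twistor lines, compact rational curves mapping isomorphically onto $\CC\PP^1$, so positive-dimensional compact subvarieties are everywhere, not only in the two special fibres.

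The paper sidesteps the fibration entirely, and this is the structural difference you should internalize. It never recovers $\op{pr}$; instead it characterizes the two embedded copies $i(\SM(X,r,\alpha,\xi))$ and $i(\SM(\overline{X},r,-\alpha,\overline{\xi}))$ by two properties manifestly invariant under any biholomorphism: being an irreducible closed analytic subset of dimension $(r^2-1)(g-1)+\frac{n(r^2-r)}{2}$, and having the property that the restriction of $T\SM_{\op{DH}}(X,r,\alpha)$ to it admits no nonzero holomorphic section (Corollary \ref{cor:TangentHodgeSections}, applied on each open Hodge chart). The second property forces every holomorphic vector field --- in particular the one generated by \emph{any} $\CC^*$-action, canonical or not --- to vanish along such a subset, so the subset lies in the fixed locus of the glued $\CC^*$-action, and the dimension bound of Proposition \ref{prop:SimpsonHodge} then identifies it with one of the two copies of the moduli of parabolic bundles; \cite[Theorem 3.2]{TorelliParabolic} finishes. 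This is precisely the device that neutralizes your first gap (it needs only that a biholomorphism preserves the \emph{class} of $\CC^*$-actions, not any particular one), and it extracts the unordered pair $\{(X,D),(\overline{X},D)\}$ without ever needing to know which chart is which. Your final step (applying the Hodge Torelli theorem, Corollary \ref{cor:torelliHodge}, to recovered charts) would be fine if the charts were recovered, but as it stands the reduction to that step is not established.
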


The rank two and coprimality conditions of the previous theorem are only necessary in order to apply the Torelli theorem in \cite{TorelliParabolic}. If the theorem of \cite{TorelliParabolic} were extended to higher rank, then Theorem \ref{teo:TorelliParabolicDH} would also hold for higher rank with the same proof given in this paper. Similarly, the Torelli theorem in \cite{TorelliParabolic} requires the parabolic weights to be chosen in a way that makes parabolic stability equivalent to the stability of the underlying vector bundle. If there existed a generalization of \cite{TorelliParabolic} for generic parabolic weights, the proof given in this article would hold not only for concentrated weights satisfying the coprimality condition, but for generic weights.

Nevertheless, the conditions of full flags, generic weights (see Section \ref{section:ParVB}) and $\beta(x)\in \ZZ$ are necessary in the current proof independently of \cite{TorelliParabolic}.

\noindent\textbf{Acknowledgments.} 
We thank Indranil Biswas for discussions. In particular, the proof of Proposition \ref{eq:forgetfulConnection} was simplified thanks to an idea of him.
This research was funded by 
MINECO (grant MTM2013-42135-P and ICMAT Severo Ochoa project
SEV-2015-0554) and the 7th European Union Framework Programme
(Marie Curie IRSES grant 612534 project MODULI). The first author was also supported by a predoctoral grant from Fundaci\'on La Caixa -- Severo Ochoa International Ph.D. Program.

\section{Parabolic Vector Bundles}
\label{section:ParVB}

Let $X$ be a smooth projective curve over $\CC$ of genus $g\ge 3$. Let $D$ be a finite set of $n\ge 1$ distinct points of $X$. We recall that a parabolic vector bundle over $X$ is a holomorphic vector bundle of rank $r$ together with a weighted flag on the fiber $E_x$ over each $x\in D$ called parabolic structure, i.e.
$$E_x=E_{x,0}\supsetneq E_{x,1} \supsetneq \cdots \supsetneq E_{x,l_x} = \{0\}$$
$$0\le \alpha_1(x)<\cdots <\alpha_{l_x}(x) <1$$
We denote by $\alpha=\{(\alpha_1(x),\ldots,\alpha_{l_x}(x))\}_{x\in D}$ the system of real weights corresponding to a fixed parabolic structure. We say that a parabolic structure is full flag if $l_x=r$ for all $x\in D$.

Equivalently \cite{SimpsonNonCompact}, we can describe the parabolic structure as a collection of decreasing left continuous filtrations, one filtration for each parabolic point. More precisely, for each $x\in D$, we have subsheaves $E_{\alpha}^x$ of $E$ on $X$, indexed by real 
$\alpha\ge 0$ such that
\begin{enumerate}[a)]
\item For every $\alpha\ge \beta$, $E_{\alpha}^x\subseteq E^x_{\beta}$
\item For every $\alpha>0$ there exist $\epsilon>0$ such that $E_{\alpha-\epsilon}^x=E_{\alpha}^x$
\item For every $\alpha$,
$E_{\alpha+1}^x=E_{\alpha}^x(-x)$
\item $E_{0}^x=E$
\end{enumerate}
Notice that conditions (a), (c) and (d) imply that the restriction of
$E_{\alpha}^x$ to $X\backslash \{x\}$ is isomorphic, for any $\alpha$, 
to $E|_{X\backslash \{x\}}$. Condition (c)
allows us to consider systems with arbitrary real weights, not necessarily between zero and one. Given
such a system, an equivalent parabolic vector bundle with weights
between zero and one can be obtained by tensoring with an appropriate
power of $\SO_X(x)$ for each $x\in D$. Using this definition,
a parabolic vector bundle is full flag if
$\dim \Gr(\{E_{\alpha}^x\}) \le 1$ for all $\alpha\ge 0$.

Let $\alpha$ be a fixed full flag parabolic structure and let $(E,E_\bullet)$ be a parabolic vector bundle over $X$ . The parabolic degree of $(E,E_\bullet)$ is defined as
$$
\pdeg(E,E_\bullet)=\deg(E)+\sum_{x\in D} \sum_{i=1}^{r} \alpha_i(x)
$$
and the parabolic slope is then
$$
\op{par}\mu(E,E_\bullet)=\frac{\pdeg(E,E_\bullet)}{\rk(E)}
$$

Let $(E,E_\bullet)$ be a parabolic vector bundle and $0\ne F\subseteq E$ a subbundle. Then $(E,E_\bullet)$ induces a parabolic structure on $F$ taking
$$
F_{\alpha}^x=F\cap E_{\alpha}^x
$$
In terms of flags on the fibers $F_x$ for $x\in D$, the induced parabolic structure on $F_x$ is given by removing all but the last duplicate subspaces of the filtration
$$
F_x = F\cap E_{x,0} \supseteq F\cap E_{x,1} \supseteq \cdots \supseteq F\cap E_{x,l_x}=\{0\}
$$
and taking the corresponding parabolic weights, i.e., for each $j$
\begin{eqnarray*}
\xymatrixcolsep{0pc}
\xymatrix{
F\cap E_x \ar@{=}[d] &  \supseteq & F\cap E_{x,i_{j-1}} \ar@{=}[d] &\supsetneq F\cap E_{x,i_{j-1}+1} = \ldots =& F\cap E_{x,i_j} \ar@{=}[d]  & \supsetneq F\cap E_{x,i_j+1}\\
F_x = F_{x,0} & \supseteq & F_{x,j-1} & \supsetneq& F_{x,j}&
}
\end{eqnarray*}
and we take $\alpha_{i_j}(x)$ as the weight for $F_{x,j}$. Let $I_F(x)=\{i_1,\ldots,i_k\}$. If the parabolic structure of $E$ is full flag, then the induced structure on $F$ is full flag too, as
$$
\dim \Gr(\{F_{\alpha}^x\})=\dim \Gr(\{F\cap E_{\alpha}^x\}) \le \dim \Gr(\{E_{\alpha}^x\})\le 1
$$
Then if $E$ is full flag, the parabolic degree of $F$ with the induced parabolic structure is
$$\pdeg(F,F_\bullet)=\deg(F) + \sum_{x\in D}\sum_{i\in I_F(x)} \alpha_i(x)$$
A parabolic bundle $(E,E_\bullet)$ is said to be stable (respectively semi-stable) if for all parabolic subbundles $F\subsetneq E$ with the induced parabolic structure we have
\begin{equation}
\label{eq:slopeconditon}
\op{par}\mu (F,F_\bullet) < \op{par}\mu(E,E_\bullet) \qquad (\text{respectively } \le )
\end{equation}

Let $\xi$ be a line bundle over $X$. Let $\SM(X,r,\alpha,\xi)$ be the moduli space of semi-stable parabolic vector bundles on $X$ of rank $r$ with weight system $\alpha$ together with an isomorphism $\bigwedge^r E \cong \xi$. We will omit the curve $X$ whenever it is clear. It is a projective scheme of dimension 
$$
\dim \SM(r,\alpha,\xi)=(g-1)(r^2-1)+\frac{n(r^2-r)}{2}
$$
Let $\SM^{\sP}(r,\alpha,\xi)$ be the open subset parameterizing the stable parabolic bundles. This open subvariety lies inside the smooth locus of $\SM(r,\alpha,\xi)$.

\begin{proposition}
Fix an integer $r\ge 2$. Then for a generic system of weights $\alpha = \{\alpha_1(x),\ldots,\alpha_r(x)\}_{x\in D}$, every full flag semi-stable parabolic vector bundle $(E,E_\bullet)$ over $X$ is stable.
\end{proposition}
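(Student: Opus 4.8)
The plan is to reduce the claim to the observation that strict semistability forces the weights to satisfy one equation taken from a locally finite family of linear relations, so that avoiding all of these relations is a generic condition.

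First I would record what failure of stability means. A full flag semi-stable parabolic bundle $(E,E_\bullet)$ is not stable precisely when there is a proper nonzero subbundle $F\subsetneq E$, of rank $s$ with $1\le s\le r-1$, for which equality holds in \eqref{eq:slopeconditon}, i.e. $\op{par}\mu(F,F_\bullet)=\op{par}\mu(E,E_\bullet)$. Since $E$ is full flag, the induced parabolic structure on $F$ is again full flag, so the set $I_F(x)\subseteq\{1,\dots,r\}$ recording which weights survive has exactly $s$ elements for each $x\in D$, and $\pdeg(F,F_\bullet)=\deg(F)+\sum_{x\in D}\sum_{i\in I_F(x)}\alpha_i(x)$. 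Clearing denominators, the slope equality becomes the single linear equation
$$\sum_{x\in D}\Big((r-s)\!\!\sum_{i\in I_F(x)}\!\!\alpha_i(x)\;-\;s\!\!\sum_{i\notin I_F(x)}\!\!\alpha_i(x)\Big)\;=\;s\deg(E)-r\deg(F)$$
in the variables $\{\alpha_i(x)\}$.

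Next I would argue that this equation is nontrivial and that only finitely many such equations are relevant inside the admissible region. The left hand side is a fixed linear form whose coefficient on $\alpha_i(x)$ equals $r-s\ge 1$ for $i\in I_F(x)$; hence it is not identically zero, and its level sets are proper affine hyperplanes. The data determining this form are the rank $s\in\{1,\dots,r-1\}$ and, for each of the finitely many $x\in D$, a size-$s$ subset $I_F(x)$, which is a finite list of possibilities. The right hand side is the integer $m=s\deg(E)-r\deg(F)$, which a priori ranges over $\ZZ$; but because admissible weights lie in the bounded chamber $0\le\alpha_1(x)<\dots<\alpha_r(x)<1$, the left hand form is bounded there, so only finitely many integers $m$ produce a hyperplane meeting the chamber. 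Altogether, the weight systems for which some subbundle $F$ can violate stability are contained in a finite union $W$ of proper hyperplanes (walls) inside the chamber.

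I would then declare a weight system $\alpha$ to be generic when it lies in the open dense complement of this finite union of walls $W$. If $\alpha$ is generic and $(E,E_\bullet)$ is a full flag semi-stable bundle that is not stable, the subbundle $F$ above would place $\alpha$ on one of these hyperplanes, contradicting genericity; hence every such $(E,E_\bullet)$ is stable. The main point to handle carefully is the bookkeeping guaranteeing nontriviality and local finiteness: verifying that the induced structure on $F$ is full flag with $|I_F(x)|=s$ (so the weight contributions form the clean sum above), that the coefficient $r-s$ is genuinely nonzero for a proper subbundle, and that the boundedness of the weight chamber confines the otherwise infinite family indexed by $m\in\ZZ$ to finitely many walls. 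Once these are in place the genericity statement is immediate.
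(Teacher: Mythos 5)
Your proposal is correct and follows essentially the same route as the paper: you derive the identical wall equation (the paper's form $\rk(F)\sum_{x\in D}\sum_{i=1}^r \alpha_i(x) - r\sum_{x\in D}\sum_{i\in I_F(x)}\alpha_i(x) = r\deg(F)-\rk(F)\deg(E)$ is your equation up to sign), index the walls by the rank $s$, the subsets $I_F(x)$, and a bounded integer $m$, and define genericity as avoiding this finite union. Your explicit check that the linear form is nonzero (coefficient $r-s\ge 1$) is a small addition the paper leaves implicit, but otherwise the arguments coincide.
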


\begin{proof}
Given a set $S$, and an integer $k$, let $\SP^k(S)$ denote the set of subsets of size $k$ of $S$. For each $0<r'<r$, each map $I:D\to \SP^{r'}(\{1,\ldots,r\})$ and each integer $-nr^2 \le m \le nr^2$, let
$$A_{I,m}=\{\alpha : r'\sum_{x\in D} \sum_{i=1}^r \alpha_i(x) - r \sum_{x\in D} \sum_{i\in I(x)} \alpha_i(x) =m\}$$
If we denote by $\SI_{r'}$ the set of possible maps $I:D\to \SP^{r'}(\{1,\ldots,r\})$, let
$$A=\bigcup_{r'=1}^{r-1} \bigcup_{I\in \SI_{r'}} \bigcup_{m=-nr^2}^{mr^2} A_{I,m}$$
Let us prove that for any system of weights $\alpha\not\in A$, every semi-stable full flag parabolic bundle is parabolically stable. Let us suppose that $(E,E_\bullet)$ is a strictly semi-stable parabolic vector bundle for the system of weights $\alpha$. Then there exist a subbundle $0\ne F \subsetneq E$ with the induced parabolic structure such that
\begin{multline*}
\frac{\deg(F) + \sum_{x\in D}\sum_{i\in I_F(x)} \alpha_i(x)}{\rk(F)} =\op{par}\mu(F,F_\bullet)\\
=\op{par}\mu(E,E_\bullet)=\frac{\deg(E) + \sum_{x\in D}\sum_{i=1}^r\alpha_i(x)}{r}
\end{multline*}
Then
$$\rk(F) \sum_{x\in D}\sum_{i=1}^r \alpha_i(x) - r \sum_{x\in D} \sum_{i\in I_F(x)} \alpha_i(x) = r \deg(F)-\rk(F) \deg(E)\in \ZZ$$
As $0\le \alpha_i(x)<1$ for every $x\in D$ and every $i=1,\ldots,r$
$$-nr^2\le \rk(F) \sum_{x\in D}\sum_{i=1}^r \alpha_i(x) - r \sum_{x\in D} \sum_{i\in I_F(x)} \alpha_i(x) \le nr^2$$
So $-nr^2 \le \deg(F)-\rk(F) \deg(E)\le nr^2$. Therefore
$$\alpha \in A_{I_F,r\deg(F)-\rk(F) \deg(E)} \subseteq A$$
\end{proof}

\begin{definition}
We say that a full flag system of weights $\alpha$ over $X$ is generic
if $\alpha\not\in A$, where $A$ is the closed subset of  $\RR^{nr}$
defined in the previous proposition. 
\end{definition}

\begin{corollary}
\label{lemma:parabolicvbsmooth}
Let $\xi$ be a line bundle over $X$. For a generic system of weights $\alpha$
$$\SM(r,\alpha,\xi)=\SM^{\sP}(r,\alpha,\xi)$$
\end{corollary}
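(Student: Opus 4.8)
The plan is to derive the equality directly from the preceding Proposition, since this corollary is essentially a restatement of it once one checks that the full flag hypothesis is automatically satisfied. First I would record the trivial inclusion: by definition $\SM^{\sP}(r,\alpha,\xi)$ is the open locus of stable points inside $\SM(r,\alpha,\xi)$, so the inclusion $\SM^{\sP}(r,\alpha,\xi)\subseteq\SM(r,\alpha,\xi)$ always holds. It therefore suffices to prove the reverse inclusion, namely that every point of $\SM(r,\alpha,\xi)$ is in fact stable.

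Next I would observe that, because the fixed weight system $\alpha=\{\alpha_1(x),\ldots,\alpha_r(x)\}_{x\in D}$ assigns $r$ distinct weights $0\le\alpha_1(x)<\cdots<\alpha_r(x)<1$ to each parabolic point, every semistable parabolic bundle $(E,E_\bullet)$ parameterized by $\SM(r,\alpha,\xi)$ is automatically full flag. Indeed, the weight data force the flag on each fiber $E_x$ to have $l_x=r$ distinct steps; since $\rk(E)=r$, this means each graded piece $\Gr(\{E_{\alpha}^x\})$ has dimension at most one, which is precisely the full flag condition recorded earlier in this section.

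With this in hand, I would simply invoke the Proposition: for the generic weight system $\alpha\notin A$, every full flag semistable parabolic bundle is stable. Applying this statement to each point of $\SM(r,\alpha,\xi)$ yields the desired reverse inclusion, and hence the equality. Since the Proposition carries out all the substantive work, there is no real obstacle here; the only point meriting a moment's attention is the verification that fixing a full flag weight system forces every admissible bundle to be full flag, which is immediate from the $\dim\Gr\le 1$ characterization. I would conclude by remarking that, combined with the earlier observation that the stable locus lies inside the smooth locus of $\SM(r,\alpha,\xi)$, this equality shows the entire moduli space is smooth for generic $\alpha$.
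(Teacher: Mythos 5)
Your proposal is correct and matches the paper's (implicit) argument: the corollary is stated without proof precisely because it is an immediate consequence of the preceding Proposition, given that in this paper a generic system of weights is by definition full flag, so every bundle parameterized by $\SM(r,\alpha,\xi)$ carries a full flag structure and the Proposition upgrades semistability to stability. Your extra check that an $r$-step weight system with strictly increasing weights forces the full flag condition is routine and consistent with the paper's conventions.
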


We will now study the relation between stability of a parabolic vector bundle and the stability of its underlying vector bundle for a certain type of full flag systems of weights.

\begin{definition}
Fix a rank $r$. A full flag system of weights $\alpha=\{(\alpha_1(x),\ldots,\alpha_r(x))\}_{x\in D}$ is said to be concentrated if $\alpha_r(x)-\alpha_1(x)<\frac{4}{nr^2}$ for all $x\in D$.
\end{definition}

\begin{lemma}
\label{lemma:concentrated}
Let $\alpha$ be a concentrated system of weights. Let $I=\{1,\ldots, r\}$. Then for all $x\in D$ and for all $I'(x)\subsetneq I$, $I'(x)\ne \emptyset$, $|I'(x)|=r'$

$$\left | \sum_{x\in D} \left (r \sum_{i\in I'(x)} \alpha_i(x) - r' \sum_{i\in I} \alpha_i(x)\right) \right | <1$$
\end{lemma}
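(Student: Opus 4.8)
The plan is to exploit the fact that, by concentration, all weights at a fixed point $x$ lie within $4/(nr^2)$ of one another, so that after subtracting off a common value the inner summand becomes a genuinely small quantity which we can estimate pointwise and then sum over the $n$ points of $D$.

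First I would normalize at each point. For $x\in D$ set $\delta_i(x)=\alpha_i(x)-\alpha_1(x)$, so that $0\le \delta_i(x)\le \alpha_r(x)-\alpha_1(x)<4/(nr^2)$ by the definition of a concentrated system. Substituting $\alpha_i(x)=\alpha_1(x)+\delta_i(x)$ into the summand $r\sum_{i\in I'(x)}\alpha_i(x)-r'\sum_{i\in I}\alpha_i(x)$, the contributions of $\alpha_1(x)$ cancel, since both carry the coefficient $rr'$ (because $|I'(x)|=r'$ and $|I|=r$). This leaves
$$
r\sum_{i\in I'(x)}\delta_i(x)-r'\sum_{i\in I}\delta_i(x),
$$
which is the crucial simplification: the expression now depends only on the small differences $\delta_i(x)$ and not on the weights themselves.

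Next I would split the second sum according to $I'(x)$ and its complement. Writing $S'(x)=\sum_{i\in I'(x)}\delta_i(x)$ and $S''(x)=\sum_{i\in I\setminus I'(x)}\delta_i(x)$, the summand rewrites as $(r-r')S'(x)-r'S''(x)$. Since $S'(x)$ is a sum of $r'$ terms and $S''(x)$ a sum of $r-r'$ terms, each strictly below $4/(nr^2)$, both $(r-r')S'(x)$ and $r'S''(x)$ lie in the interval $[0,\,4r'(r-r')/(nr^2))$. As the two quantities are nonnegative, the absolute value of their difference is bounded by $4r'(r-r')/(nr^2)$.

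Finally I would invoke the elementary inequality $r'(r-r')\le r^2/4$, valid for every integer $1\le r'\le r-1$, to conclude that each summand has absolute value strictly less than $1/n$; summing over the $n$ points of $D$ and applying the triangle inequality gives the desired bound $<1$. The step that must be gotten right — and the reason the constant $4$ appears in the definition of \emph{concentrated} — is precisely this pointwise estimate: bounding $r\sum_{i\in I'(x)}\alpha_i(x)$ and $r'\sum_{i\in I}\alpha_i(x)$ separately discards the cancellation and yields something far too weak, so one must first reduce to the $\delta_i(x)$ and only then use $r'(r-r')\le r^2/4$.
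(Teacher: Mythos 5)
Your proof is correct and follows essentially the same route as the paper's: both split $r'\sum_{i\in I}\alpha_i(x)$ along $I'(x)$ and its complement, reduce to the per-point estimate $r'(r-r')\bigl(\alpha_r(x)-\alpha_1(x)\bigr)<4r'(r-r')/(nr^2)$, and finish with $r'(r-r')\le r^2/4$ (the paper phrases this as the arithmetic--geometric inequality). Your normalization via $\delta_i(x)=\alpha_i(x)-\alpha_1(x)$ merely makes explicit the cancellation that the paper achieves by bounding $\alpha_i(x)$ between $\alpha_1(x)$ and $\alpha_r(x)$, and incidentally cleans up a typo in the paper's displayed decomposition (where the complement $I^c(x)$ is misprinted as $I'(x)$).
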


\begin{proof}
For each $I(x)\subsetneq I$, let $I^c(x)=I\backslash I(x)$. Then
$$\sum_{x\in D} \left ( r \sum_{i\in I'(x)} \alpha_i(x) - r' \sum_{i\in I} \alpha_i(x) \right) =\sum_{x\in D} \left ( (r-r') \sum_{i\in I'(x)} \alpha_i(x) - r' \sum_{i\in I'(x)} \alpha_i(x) \right)$$
As $\alpha_1(x) \le \alpha_i(x) \le \alpha_r(x)$, yields 
\begin{multline*}
\sum_{x\in D} \left ( (r-r') \sum_{i\in I'(x)} \alpha_i(x) - r' \sum_{i\in I'(x)} \alpha_i(x) \right) \\
\le \sum_{x\in D} \left ( (r-r') r' \alpha_r(x) - r' (r-r') \alpha_1(x) \right) < n(r-r')r' \frac{4}{nr^2}
\end{multline*}
Similarly
\begin{multline*}
\sum_{x\in D} \left ( (r-r') \sum_{i\in I'(x)} \alpha_i(x) - r' \sum_{i\in I'(x)} \alpha_i(x) \right)\\
\ge \sum_{x\in D} \left ( (r-r') r' \alpha_1(x) - r' (r-r') \alpha_r(x) \right) > -n(r-r')r' \frac{4}{nr^2}
\end{multline*}
Finally, the desired inequalities yield from arithmetic-geometric inequality, as for every $0\le r'<r$
$$(r-r)' r' \frac{4}{r^2} \le \left (\frac{ (r-r')+r'}{2} \right)^2 \frac{4}{r^2}=1$$
\end{proof}

\begin{proposition}
\label{prop:concentrated}
Let $\alpha$ be a full flag concentrated system of weights for rank $r$. Then for every parabolic vector bundle $(E,E_\bullet)$ over $X$ with $\op{gcd}(\deg(E),\rk(E))=1$ the following are equivalent

\begin{enumerate}
\item $E$ is semi-stable as a vector bundle
\item $E$ is stable as a vector bundle
\item $(E,E_\bullet)$ is semi-stable with respect to $\alpha$ as a parabolic vector bundle
\item $(E,E_\bullet)$ is stable with respect to $\alpha$ as a parabolic vector bundle
\end{enumerate}

\end{proposition}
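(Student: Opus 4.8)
The plan is to translate each of the four conditions into a statement about the single integer $N_F:=r\deg(F)-\rk(F)\deg(E)$ attached to a proper subbundle $0\ne F\subsetneq E$, and to observe that the concentrated hypothesis makes the parabolic weights too small to affect the sign of $N_F$, while the coprimality hypothesis forbids $N_F=0$. Once this is set up, all four conditions become visibly equivalent.

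First I would dispose of the easy implications and the relation between the two notions of bundle stability. The implications $(2)\Rightarrow(1)$ and $(4)\Rightarrow(3)$ are trivial. For $(1)\Rightarrow(2)$, if $E$ were strictly semistable there would be a proper $F$ with $\mu(F)=\mu(E)$, i.e.\ $N_F=0$; but $N_F=0$ means $r\mid \rk(F)\deg(E)$, and since $\op{gcd}(\deg(E),r)=1$ this forces $r\mid\rk(F)$, impossible for $0<\rk(F)<r$. Thus $(1)\Leftrightarrow(2)$, and in fact $N_F\ne 0$ for every proper subbundle.

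The core computation is to clear denominators in the parabolic slope comparison. For $F$ of rank $r'$, rewriting $\op{par}\mu(F,F_\bullet)\le\op{par}\mu(E,E_\bullet)$ with the formula for $\pdeg(F,F_\bullet)$ yields
$$
N_F \ \le\ r'\sum_{x\in D}\sum_{i=1}^{r}\alpha_i(x)-r\sum_{x\in D}\sum_{i\in I_F(x)}\alpha_i(x)=:\delta_F,
$$
and the corresponding strict inequality characterizes parabolic stability. Now Lemma \ref{lemma:concentrated}, applied with $I'(x)=I_F(x)$, gives exactly $|\delta_F|<1$. Since $N_F\in\ZZ$, parabolic semistability forces $N_F\le\delta_F<1$, hence $N_F\le 0$ for all $F$, which is vector-bundle semistability; this gives $(3)\Rightarrow(1)$. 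Conversely, if $E$ is stable then $N_F\le -1$ for all $F$, and since $\delta_F>-1$ we get $N_F\le -1<\delta_F$, i.e.\ parabolic stability; this gives $(2)\Rightarrow(4)$. The cycle $(1)\Leftrightarrow(2)\Rightarrow(4)\Rightarrow(3)\Rightarrow(1)$ then closes and proves the proposition.

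I do not expect a genuine obstacle here: the analytic content is already contained in Lemma \ref{lemma:concentrated}, and the remaining argument is purely arithmetic. The only point requiring care is the careful tracking of strict versus non-strict inequalities, and the two places where the hypotheses are genuinely used---integrality of $N_F$ to pass from $N_F<1$ to $N_F\le 0$, and coprimality to exclude $N_F=0$---are precisely what collapses all four conditions onto one another.
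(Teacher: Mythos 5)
Your proof is correct and takes essentially the same approach as the paper's: the identical cycle $(1)\Rightarrow(2)\Rightarrow(4)\Rightarrow(3)\Rightarrow(1)$, with coprimality ruling out $N_F=0$, Lemma \ref{lemma:concentrated} pinning the weight term $\delta_F$ strictly between $-1$ and $1$, and integrality of $N_F$ converting the strict bounds into the required (semi)stability inequalities. (Incidentally, your $\delta_F$ carries the correct sign, while the paper's displayed cleared-denominator stability inequality has a harmless sign slip on its right-hand side; since Lemma \ref{lemma:concentrated} bounds the absolute value, neither argument is affected.)
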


\begin{proof}
Let $(E,E_\bullet)$ be a parabolic vector bundle over $X$.  We will prove $(1)\Rightarrow (2) \Rightarrow (4) \Rightarrow (3)\Rightarrow (1)$. By definition of (semi)-stability of a parabolic vector bundle $(4)\Rightarrow (3)$.

$(1)\Rightarrow (2)$ As a vector bundle, $E$ is semi-stable if and only if for every subbundle $\{0\}\ne F\ne E$ we have
$$\frac{\deg(F)}{\rk(F)} \le \frac{\deg(E)}{\rk(E)} \quad .$$
Or, equivalently, if and only if
$$\deg(F)\rk(E)-\deg(E)\rk(F) \le 0 \quad .$$
Let us suppose that $E$ is strictly semi-stable. Then there exist a subbundle $F$ with $0\ne F\ne E$ such that
$$\deg(F)\rk(E)-\deg(E)\rk(F)=0 \quad .$$
So $\deg(F)\rk(E)=\deg(E)\rk(F)$ and we have $\rk(E) | \deg(E)\rk(F)$. As $\op{gcd}(\deg(E),\rk(E))=1$ we must have $\rk(E) | \rk(F)$. Nevertheless, $F$ is a subbundle of $E$ with $\{0\}\ne F\ne E$, so $0<\rk(F)<\rk(E)$ and we arrive to a contradiction.

$(2)\Rightarrow (4)$ For every subbundle $F$ we consider the system of weights $\alpha_F$ induced by $\alpha$ on $F$. Then for every $x\in D$ there exist a subset $I_F(x)\subsetneq I$ with $|I_F(x)|=\rk(F)$ such that $\alpha_F=\{ \alpha_i : i\in I_F(x)\}_{x\in D}$. 
As a parabolic vector bundle, $(E,E_\bullet)$ is stable with respect to $\alpha$ if and only if for every subbundle $\{0\}\ne F\ne E$
$$\frac{\deg(F)+\sum_{x\in D}\sum_{i\in I_F(x)} \alpha_i(x)}{\rk(F)} < \frac{\deg(E)+\sum_{x\in D}\sum_{i\in I}\alpha_i(x)}{\rk(E)}$$
or equivalently, if and only if
$$\deg(F)\rk(E)-\deg(E)\rk(F)< \sum_{x\in D}\left (\rk(E) \sum_{i\in I_F(x)} \alpha_i(x) - \rk(F) \sum_{i\in I} \alpha_i(x) \right) \quad .$$
On the other hand if $E$ is stable then for every subbundle $0\ne F\ne E$ yields
$$\deg(F)\rk(E)-\deg(E)\rk(F)<0 \quad .$$
As $\deg(F)\rk(E)-\deg(E)\rk(F)\in \mathbb{Z}$ we have $\deg(F)\rk(E)-\deg(E)\rk(F)\le -1$. Lemma \ref{lemma:concentrated} implies
$$-1 < \sum_{x\in D}\left (\rk(E) \sum_{i\in I_F(x)} \alpha_i(x) - \rk(F) \sum_{i\in I} \alpha_i(x) \right) <1$$
so for every subbundle $\{0\}\ne F\ne E$
$$\deg(F)\rk(E)-\deg(E)\rk(F)\le -1 < \sum_{x\in D}\left (\rk(E) \sum_{i\in I_F(x)} \alpha_i(x) - \rk(F) \sum_{i\in I} \alpha_i(x)  \right) $$

$(3)\Rightarrow (1)$ If $(E,E_\bullet)$ is semi-stable with respect to $\alpha$ then for every subbundle $\{0\}\ne F\ne E$
$$
\deg(F)\rk(E)-\deg(E)\rk(F)\le \sum_{x\in D}\left (\rk(E) \sum_{i\in I_F(x)} \alpha_i(x) - \rk(F) \sum_{i\in I} \alpha_i(x)  \right) \quad .$$
As a consequence of Lemma \ref{lemma:concentrated},
$$\sum_{x\in D}\left (\rk(E) \sum_{i\in I_F(x)} \alpha_i(x) - \rk(F) \sum_{i\in I} \alpha_i(x)  \right) <1 \quad .$$
Therefore,
$$
\deg(F)\rk(E)-\deg(E)\rk(F)\le \sum_{x\in D}\left (\rk(E) \sum_{i\in I_F(x)} \alpha_i(x) - \rk(F) \sum_{i\in I} \alpha_i(x)  \right)<1 .$$
We have $\deg(F)\rk(E)-\deg(E)\rk(F)\in \mathbb{Z}$ and $\deg(F)\rk(E)-\deg(E)\rk(F)<1$, so for every subbundle $0\ne F\ne E$, $\deg(F)\rk(E)-\deg(E)\rk(F)\le 0$ and so, $E$ is semi-stable.
\end{proof}

The weights will be required to be concentrated in order to apply later on the Torelli theorem for the moduli space of rank 2 semi-stable parabolic vector bundles given in \cite{TorelliParabolic}. Nevertheless, if there existed a generalization of the Torelli theorem in \cite{TorelliParabolic} for generic parabolic weights, the proofs given in this paper would also hold for generic parabolic weights.

\begin{lemma}
\label{lemma:CotangentSections}
Let $\alpha$ be a generic (in particular full flag) system of weights. Then the holomorphic cotangent bundle
$$
T^*\SM(r,\alpha,\xi)\longrightarrow \SM(r,\alpha,\xi)
$$
does not admit any nonzero holomorphic section.
\end{lemma}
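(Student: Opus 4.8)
\emph{Reformulation.} A global holomorphic section of the projection $T^*\SM(r,\alpha,\xi)\to \SM(r,\alpha,\xi)$ is precisely a global holomorphic $1$-form, so the assertion is equivalent to $H^0(\SM(r,\alpha,\xi),\Omega^1)=0$. My first step is to record that the ambient space is well behaved: by Corollary \ref{lemma:parabolicvbsmooth}, for a generic system $\alpha$ one has $\SM(r,\alpha,\xi)=\SM^{\sP}(r,\alpha,\xi)$, and this stable locus lies in the smooth locus as noted above; together with the projectivity stated earlier, $M:=\SM(r,\alpha,\xi)$ is a smooth projective (hence compact K\"ahler) variety. Thus the problem reduces to the purely geometric vanishing $H^0(M,\Omega^1_M)=0$.

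\emph{Vanishing via unirationality.} The plan is to deduce this vanishing from rationality of the moduli space. Concretely, $M$ is unirational: the moduli space of parabolic bundles with generic weights is rational by the results of Boden--Yokogawa, the parabolic genericity playing here the role that coprimality plays in the non-parabolic case. As a sanity check, when the underlying numerical data are coprime and $\alpha$ is concentrated, Proposition \ref{prop:concentrated} identifies parabolic stability with ordinary stability, so the forgetful morphism realizes $M$ as a fibration over the rational moduli space $\SM(r,\xi)$ of stable bundles (rational by King--Schofield) whose fibers are products of full flag varieties, hence unirational. Granting a dominant rational map $\PP^N\dashrightarrow M$, I would resolve its indeterminacy to obtain a smooth projective $\wt{\PP}$ birational to $\PP^N$ together with a dominant morphism $\wt{\PP}\to M$; pulling back a $1$-form gives an element of $H^0(\wt{\PP},\Omega^1)$, which vanishes because $H^0(\Omega^1)$ is a birational invariant of smooth projective varieties and $H^0(\PP^N,\Omega^1)=0$. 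Since the pullback map on $1$-forms is injective for a dominant morphism, the original form vanishes, yielding $H^0(M,\Omega^1_M)=0$.

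\emph{A second route and the main difficulty.} Equivalently, one can prove $H^1(M,\SO_M)=0$ and then invoke Hodge symmetry on the compact K\"ahler manifold $M$, giving $\dim H^0(M,\Omega^1_M)=\dim H^1(M,\SO_M)$; the vanishing of $H^1(\SO_M)$ again follows from (uni)rationality, and in fact holds because such moduli spaces are rationally connected. Either way, the one genuinely nontrivial input is the (uni)rationality of the parabolic moduli space for an \emph{arbitrary} generic full-flag system of weights, and I expect this to be the main obstacle. While rationality is transparent for convenient chambers (the flag-bundle picture above), in general one must either cite it in the needed generality or reduce to a convenient chamber by observing that, as $\alpha$ varies over the generic locus with $r$ and $\xi$ fixed, the spaces $\SM(r,\alpha,\xi)$ are related by variation-of-GIT wall-crossings and are therefore all birational; the birational invariant $H^0(\Omega^1)$ may then be computed for the most convenient weights. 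The remaining steps -- the reformulation as $1$-forms and the pullback argument -- are formal.
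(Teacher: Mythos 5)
Your proposal is correct and takes essentially the same route as the paper: genericity gives $\SM(r,\alpha,\xi)=\SM^{\sP}(r,\alpha,\xi)$, hence a smooth projective variety, and the rationality input you flag as the main obstacle is exactly what the paper cites (Boden--Yokogawa, Theorem 6.1, which covers arbitrary full-flag weights), after which holomorphic $1$-forms vanish. Your resolution-of-indeterminacy argument and the Hodge-symmetry variant are just expansions of the standard fact --- smooth rational projective varieties admit no nonzero holomorphic $1$-forms --- that the paper invokes in one line.
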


\begin{proof}
As a consequence of Corollary \ref{lemma:parabolicvbsmooth}, $\SM^{\sP}(r,\alpha,\xi)=\SM(r,\alpha,\xi)$. Then $\SM(r,\alpha,\xi)$ is smooth. As we are considering full flags on every point in $D$, by \cite[Theorem 6.1]{BY} $\SM(r,\alpha,\xi)$ is rational. Then it is a smooth rational projective variety, so it does not admit any nonzero holomorphic 1-form.
\end{proof}

\section{Parabolic Higgs bundles}
\label{section:ParHiggsB}

Let $(E,E_\bullet)$ be a parabolic vector bundle. A strongly parabolic endomorphism of $E$ is an endomorphism $\Phi:E\to E$ such that for every point $x\in D$,
$$\Phi(E_{x,i})\subseteq E_{x,i+1} \quad .
$$
Analogously, we say that an endomorphism $\Phi:E\to E$ is weakly parabolic if it satisfies
$$\Phi(E_{x,i})\subseteq E_{x,i} \quad .$$
Denote by $\SParEnd(E,E_\bullet)$ the sheaf of strongly parabolic endomorphisms of $(E,E_\bullet)$ and by $\ParEnd(E,E_\bullet)$ the sheaf of weakly parabolic endomorphisms.

A strongly parabolic Higgs bundle $(E,E_\bullet,\Phi)$ is a parabolic vector bundle $(E,E_\bullet)$ together with an homomorphism called Higgs field
$$\Phi : E\longrightarrow E\otimes K(D)
$$
such that for each $x\in D$ the homomorphism induced in the filtration over the fiber $E_x$ satisfies
$$
\Phi(E_{x,i}) \subseteq E_{x,i+1}\otimes K(D)|_x
$$
where $K$ is the canonical bundle over $X$ and $K(D)$ is the line bundle $K\otimes \SO_X(D)$. Similarly, a weakly parabolic Higgs bundle is a parabolic vector bundle $(E,E_\bullet)$ together with a Higgs field $\Phi:E\to E\otimes K(D)$ such that for each $x\in D$ the homomorphism induced in the filtration over the fiber $E_x$ satisfies
$$
\Phi(E_{x,i}) \subseteq E_{x,i}\otimes K(D)|_x
$$
We can view the Higgs field both as a holomorphic morphism
$\varphi:E\to E\otimes K(D)$  and as a meromorphic morphism
$\varphi:E\to E\otimes K$ that has at most simple poles over $D$. A
parabolic subbundle $(F,F_\bullet)\subset (E,E_\bullet)$ is said to be
$\Phi$-invariant if $\Phi(F)\subseteq F\otimes K(D)$. A parabolic
Higgs bundle (either strongly or weakly) is called (semi)-stable if
the stability slope condition \eqref{eq:slopeconditon} holds for every
$\Phi$-invariant subbundle $F\subsetneq E$, $F\ne \{0\}$. 
If we do not mention explicitly if a parabolic Higgs bundle is
strongly of weakly parabolic, it will be understood that it is
strongly parabolic.

We denote by $\SM_{\op{Higgs}}(r,\alpha,\xi)$ the moduli space of semi-stable (strongly) parabolic Higgs bundles of rank $r$, weight system $\alpha$ and $\tr\Phi=0$ together with an isomorphism $\bigwedge^r E \cong \xi$. It is an irreducible normal projective variety of dimension
$$\dim \SM_{\op{Higgs}}(r,\alpha,\xi)=2(g-1)(r^2-1)+n(r^2-r)$$
If $\alpha$ is a generic system of weights, then every semi-stable parabolic Higgs bundle is stable. The set of stable parabolic Higgs bundles lies within the smooth locus of $\SM_{\op{Higgs}}(r,\alpha,\xi)$. Therefore, if $\alpha$ is a generic system of weights $\SM_{\op{Higgs}}(r,\alpha,\xi)$ is smooth. Finally, let $\SM_{\op{Higgs}}^{w}(r,\alpha,d)$ be the moduli space of semi-stable weakly parabolic Higgs bundles of rank $r$ and weight system $\alpha$ such that the underlying vector bundle has degree $d$.

There is a natural embedding
\begin{equation}
\label{eq:embeddingHiggs}
i: \SM(r,\alpha,\xi) \hookrightarrow \SM_{\op{Higgs}}(r,\alpha,\xi)
\end{equation}
defined by $(E,E_\bullet) \mapsto (E,E_\bullet,0)$. Let $\SM_{\op{Higgs}}^{\sP}(r,\alpha,\xi)$ be the locus of parabolic Higgs bundles $(E,E_\bullet,\Phi)$ whose underlying parabolic vector bundle $(E,E_\bullet)$ is stable. It is an open dense subset of $\SM_{\op{Higgs}}(r,\alpha,\xi)$. Let
\begin{equation}
\label{eq:forgetfulHiggs}
\op{pr}_E: \SM_{\op{Higgs}}^{\sP}(r,\alpha,\xi) \longrightarrow \SM^{\sP}(r,\alpha,\xi)
\end{equation}
be the forgetful map defined by $(E,E_\bullet,\Phi)\to (E,E_\bullet)$. By deformation theory, the tangent space at $[(E,E_\bullet)]$, $T_{[(E,E_\bullet)]}\SM(r,\alpha,\xi)$ is isomorphic to $H^1(X,\ParEnd(E,E_\bullet))$. By the parabolic version of Serre duality,
$$H^1(X,\ParEnd(E,E_\bullet))^* \cong H^0(X,\SParEnd(E,E_\bullet) \otimes K(D))
$$
and hence, the Higgs field is an element of the cotangent bundle $T_{[(E,E_\bullet)]}^*\SM(r,\alpha,\xi)$ and one has a canonical isomorphism
\begin{equation}
\label{eq:HiggsCotangent}
\SM_{\op{Higgs}}^{\sP}(r,\alpha,\xi) \stackrel{\sim}{\longrightarrow} T^* \SM^{\sP}(r,\alpha,\xi)
\end{equation}
of varieties over $\SM^{\sP}(r,\alpha, \xi)$.

Corollary \ref{lemma:parabolicvbsmooth} implies that $\SM^{\sP}(r,\alpha,\xi) = \SM(r,\alpha,\xi)$ so we get an isomorphism 
\begin{equation}
\label{eq:Higgsdeformation}
\SM_{\op{Higgs}}^{\sP}(r,\alpha,\xi) \stackrel{\sim}{\longrightarrow} T^* \SM(r,\alpha,\xi)
\end{equation}
Let us recall the definition of the Hitchin map and the Hitchin space for weakly parabolic Higgs bundles.
Let $S=\mathbb{V}(K(D))$ be the total space of the line bundle K(D), let
$$p:S=\underline{\Spec}\op{Sym}^\bullet (K^{-1}\otimes \SO_X(D)^{-1}) \longrightarrow X
$$
be the projection, and $x\in H^0(S, p^*(K(D)))$ be the tautological section. The characteristic polynomial of a Higgs field
$$\det(x\cdot \op{id} - p^*\Phi)=x^r+ \widetilde{s_1}x^{r-1}+ \widetilde{s_2}x^{r-2}+\cdots+\widetilde{s_r}
$$
defines sections $s_i\in H^0(X,K^iD^i)$, such that $\widetilde{s_i}=p^*s_i$, where $K^iD^j$ denotes the tensor product of the $i$-th power of $K$ with the $j$-th power of the line bundle associated to $D$. We define the Hitchin space as
\begin{equation}
\SH = \bigoplus_{i=1}^r H^0(K^i D^i)
\end{equation}
The Hitchin map is defined as
\begin{equation}
\label{eq:HitchinMapNS}
H^{w}:\SM_{\op{Higgs}}^{w}(r,\alpha,d) \longrightarrow \SH
\end{equation}
sending each parabolic Higgs bundle $(E,E_\bullet,\Phi)$ to the characteristic polynomial of $\Phi$.

We can now restrict the Hitchin map to $\SM_{\op{Higgs}}(r,\alpha,\xi)$, where $\deg(\xi)=d$. We are assuming that $\Phi$ is strongly parabolic. Therefore the residue at each point of $D$ is nilpotent. This implies that the eigenvalues of $\Phi$ vanish at $D$, so for each $i>0$ the section $s_i$ belongs to the subspace $H^0(X,K^iD^{i-1})\subseteq H^0(X,K^iD^i)$. Moreover, in order to fix the determinant, we are asking $\Phi$ to be traceless, so $s_1=0$ and the image in the Hitchin space lies in

\begin{equation}
\SH_0 = \bigoplus_{i=2}^r H^0(K^i D^{i-1})
\end{equation}
Therefore, we obtain a map
\begin{equation}
\label{eq:HitchinMap}
H:\SM_{\op{Higgs}}(r,\alpha,\xi) \longrightarrow \SH_0
\end{equation}

\begin{lemma}
\label{lemma:HitchinProper}
The Hitchin map restricted to $\SM_{\op{Higgs}}(r,\alpha,\xi)$, \eqref{eq:HitchinMap} is projective.
\end{lemma}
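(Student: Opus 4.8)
The plan is to deduce projectivity from properness together with the existence of a relatively ample line bundle. Since $\SM_{\op{Higgs}}(r,\alpha,\xi)$ is constructed by geometric invariant theory it carries a natural ample (parabolic determinant-of-cohomology) line bundle, and one checks this line bundle is ample relative to $H$; granting this, it suffices to prove that the map $H$ in \eqref{eq:HitchinMap} is proper. As the target $\SH_0$ is a finite-dimensional affine space and $H$ is a morphism of finite-type separated $\CC$-schemes, I would verify properness through the valuative criterion.

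Let $R$ be a discrete valuation ring with fraction field $K$, and suppose we are given a morphism $\Spec R\to \SH_0$ together with a lift $\Spec K\to \SM_{\op{Higgs}}(r,\alpha,\xi)$ of its restriction to the generic point; concretely, a family of stable strongly parabolic Higgs bundles $(E_K,E_{K,\bullet},\Phi_K)$ over $\Spec K$ whose characteristic polynomial coefficients $(s_2,\ldots,s_r)$ extend as sections over $\Spec R$. I would first spread the underlying parabolic bundle out to a torsion-free, and after saturation locally free, family over $\Spec R$ using properness of an appropriate Quot scheme; the Higgs field $\Phi_K$ then extends across the special fibre as a meromorphic endomorphism with at worst simple poles along $D$, because its eigenvalues are governed by the extended characteristic polynomial and are therefore bounded. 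The central-fibre Higgs bundle produced this way need not be semistable, so I would apply a Langton-type modification, i.e. an elementary transformation supported on the special fibre, to replace it by a semistable one while preserving the Higgs field and the strongly parabolic structure; the traceless and fixed-determinant conditions persist throughout, and the process terminates by the standard decreasing-invariant argument. This yields the required extension $\Spec R\to \SM_{\op{Higgs}}(r,\alpha,\xi)$, establishing properness and hence projectivity.

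Equivalently, and this is the version I would prefer to write up, one can argue through the spectral correspondence. A point $(s_2,\ldots,s_r)\in \SH_0$ cuts out a spectral curve inside $S=\VV(K(D))$ via $x^r+(p^*s_2)x^{r-2}+\cdots+p^*s_r=0$, and letting $s$ vary produces a projective family of spectral curves $\mathcal{X}\to \SH_0$. The spectral correspondence identifies $\SM_{\op{Higgs}}(r,\alpha,\xi)$ over $\SH_0$ with a relative moduli space of rank-one torsion-free (parabolic) sheaves on $\mathcal{X}/\SH_0$ with fixed pushforward determinant $\xi$ and nilpotent behaviour along $D$, the latter encoding the strongly parabolic and traceless conditions. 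Since $\mathcal{X}\to \SH_0$ is projective, the relative moduli space of such sheaves is projective over $\SH_0$ by the construction of moduli of sheaves on a projective morphism, and under this identification $H$ becomes the structure morphism to $\SH_0$; projectivity follows.

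The main obstacle is boundedness feeding into whichever argument is chosen. In the valuative approach one must guarantee that the extension of the underlying bundle can be selected inside a bounded family controlled only by the fixed Hitchin image, and that the Langton modification interacts correctly with the parabolic filtrations. In the spectral approach the analogous difficulty is making the dictionary between strongly parabolic Higgs data and torsion-free sheaves on the possibly singular, non-reduced spectral curve precise enough to pin down the traceless, fixed-determinant locus, and checking that the relatively ample polarization on the relative moduli space matches the GIT polarization on $\SM_{\op{Higgs}}(r,\alpha,\xi)$.
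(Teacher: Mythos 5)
Your outline is a legitimate route in principle---properness of $H$ together with quasi-projectivity of the moduli space would give projectivity, and both the Langton-type valuative argument and the spectral-correspondence argument exist in the literature---but as written there is a genuine gap at the decisive step, and the items you flag as ``obstacles'' are precisely the mathematical content of the lemma. Concretely, in your valuative approach the claim that $\Phi_K$ extends across the special fibre ``because its eigenvalues are governed by the extended characteristic polynomial and are therefore bounded'' is false as stated: boundedness of the characteristic coefficients does not bound a nilpotent part of the Higgs field. For instance, the family $\Phi_t=\left(\begin{smallmatrix}0 & t^{-1}\\ 0 & 0\end{smallmatrix}\right)$ has identically vanishing characteristic polynomial yet does not extend over $t=0$. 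In the standard proofs (Nitsure in the non-parabolic case, Yokogawa in the parabolic case) the extension of the Higgs field is not a preliminary step deduced from the Hitchin image; it is achieved simultaneously with the Langton elementary modifications: if $\Phi$ acquires a pole of order $k>0$ along the special fibre, one rescales by the $k$-th power of the uniformizer, analyzes the resulting limit (whose characteristic polynomial vanishes), and uses modifications to decrease $k$, checking at each stage that the elementary transformations can be chosen compatibly with the quasi-parabolic flags over $D$ so that the strongly parabolic, traceless and fixed-determinant conditions survive. Until that interaction is carried out, properness is not proved. Your spectral-correspondence alternative defers the analogous hard part: the dictionary over the discriminant locus, where spectral curves are singular or non-reduced and the parabolic modifications over $D$ must be encoded precisely.

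By contrast, the paper's proof is a short, soft reduction that avoids all of this. It cites Yokogawa's theorem (the paper's reference [Y, Corollary 5.12]) that the Hitchin map $H^w$ on the moduli space $\SM_{\op{Higgs}}^{w}(r,\alpha,d)$ of \emph{weakly} parabolic Higgs bundles is projective, and then observes that the strongly parabolic, traceless, fixed-determinant locus is closed in $\SM_{\op{Higgs}}^{w}(r,\alpha,d)$ (locally these are closed conditions on the residues, the trace and the determinant), so $\SM_{\op{Higgs}}(r,\alpha,\xi)$ sits as a closed subscheme; composing the closed immersion with the factorization $\SM_{\op{Higgs}}^{w}(r,\alpha,d)\hookrightarrow \PP^n_{\SH}\to\SH$ and restricting over $\SH_0\subset\SH$ yields projectivity of $H$. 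If your intent was to reprove Yokogawa's properness from scratch, you must supply the full parabolic Langton argument sketched above; the efficient repair is to do as the paper does and cite it, which reduces your task to the easy closedness statement.
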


\begin{proof}
By \cite[Corollary 5.12]{Y} the map \eqref{eq:HitchinMapNS} is projective. We clearly have an immersion
$$j:\SM_{\op{Higgs}}(r,\alpha,\xi) \longrightarrow \SM_{\op{Higgs}}^{w}(r,\alpha,d) \quad .$$
We can characterize $\SM_{\op{Higgs}}(r,\alpha,\xi)$ as the subset of $\SM_{\op{Higgs}}^{w}(r,\alpha,d)$ parameterizing semi-stable parabolic Higgs bundles $(E,E_\bullet,\Phi)$, such that $\det(E)\cong \xi$, $\tr \Phi=0$ and $\Phi$ is strongly parabolic. The condition of being strongly parabolic can be locally set imposing that for every $x\in D$ and every choice of local coordinates for the bundle around $x$ coherent with the given filtration, $\Phi$ has zeros in the diagonal entries. Therefore, all three conditions are closed and the map $j$ is a closed map. We now have the following commutative diagram
\begin{eqnarray*}
\xymatrixcolsep{3pc}
\xymatrix{
\SM_{\op{Higgs}}(r,\alpha,\xi) \ar[r]^-j \ar[d]_H & \SM_{\op{Higgs}}^{w}(r,\alpha,d) \ar[r]^-f  \ar[d]_{H^w}& \PP_{\SH}^n \ar[dl]\\
\SH_0 \ar[r] & \SH &
}
\end{eqnarray*}
As $H^{w}$ is projective, it factors into a closed immersion $f:\SM_{\op{Higgs}}^{w}(r,\alpha,d) \to \PP_{\SH}^n$ for some $n$, followed by the projection $\PP_{\SH}^n \to \SH$. Then $H^{w}\circ j$ factors into another closed immersion $f\circ j:\SM_{\op{Higgs}}(r,\alpha,\xi)\to \PP_{\SH}^n$ and the projection $\PP_{\SH}^n \to \SH$, so it is projective. As $H$ is just the restriction of the image of $j\circ H^{w}$ to $\SH_0$, $H$ must be projective.
\end{proof}

\begin{lemma}
\label{lemma:HitchinConnected}
The fibers of the Hitchin map \eqref{eq:HitchinMap} are connected.
\end{lemma}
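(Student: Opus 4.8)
The plan is to reduce the statement to the connectedness of the \emph{generic} fiber by means of the Stein factorization, exploiting the irreducibility of $\SM_{\op{Higgs}}(r,\alpha,\xi)$ and the normality of the base $\SH_0$. Since $H$ is projective by Lemma \ref{lemma:HitchinProper}, it is proper; moreover a dimension count (the fibers are Lagrangian, of dimension $\tfrac12\dim\SM_{\op{Higgs}}(r,\alpha,\xi)=\dim\SH_0$, using $\SM^{\sP}_{\op{Higgs}}\cong T^*\SM$) shows that $H$ is dominant, hence surjective. Write the Stein factorization
\begin{equation*}
\SM_{\op{Higgs}}(r,\alpha,\xi)\stackrel{\tilde H}{\too} Z \stackrel{g}{\too}\SH_0,
\end{equation*}
where $\tilde H$ is proper with geometrically connected fibers and $g$ is finite. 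As $\SM_{\op{Higgs}}(r,\alpha,\xi)$ is irreducible and normal, so is $Z$, while $\SH_0$ is an affine space, in particular normal and irreducible. Thus $g$ is a finite surjective morphism of irreducible varieties; over a general point $s\in\SH_0$ the set $g^{-1}(s)$ consists of $\deg g$ distinct points (char $0$), and $H^{-1}(s)=\bigsqcup_{z\in g^{-1}(s)}\tilde H^{-1}(z)$ is a disjoint union of $\deg g$ nonempty connected pieces. Hence if the general fiber of $H$ is connected, then $\deg g=1$, so $g$ is finite and birational onto the normal variety $\SH_0$; by Zariski's main theorem $g$ is then an isomorphism, and every fiber of $H$ agrees with a fiber of $\tilde H$ and is therefore connected.

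It thus remains to prove that $H^{-1}(s)$ is connected for generic $s$, and for this I would use the spectral correspondence. For $s=(s_2,\dots,s_r)\in\SH_0$ let $X_s\subset S=\VV(K(D))$ be the spectral curve cut out by the characteristic polynomial $x^r+p^*s_2\,x^{r-2}+\cdots+p^*s_r=0$. For a generic choice of $s$ this curve is integral, and the spectral construction identifies the Higgs bundles $(E,E_\bullet,\Phi)$ with characteristic polynomial $s$ with rank one torsion-free sheaves on $X_s$ whose pushforward carries the prescribed parabolic and determinant data. After imposing $\bigwedge^r E\cong\xi$ and $\tr\Phi=0$, the fiber $H^{-1}(s)$ becomes a torsor over the Prym variety of $X_s\to X$, that is, over the kernel of the norm map $\Pic(X_s)\to\Pic(X)$. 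Being a torsor over an abelian variety, $H^{-1}(s)$ is connected, which is exactly what is needed.

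The main obstacle is the verification of the spectral correspondence in the \emph{strongly} parabolic setting. One must account for the behaviour of $X_s$ over the points of $D$: since the residues of $\Phi$ are nilpotent, all the $s_i$ vanish along $D$, so $X_s$ meets each fiber $p^{-1}(x)$, $x\in D$, only along the zero section, and one has to check that for generic $s$ the curve $X_s$ is nevertheless integral and that strong parabolicity translates into a clean local condition on the associated sheaf on $X_s$. Establishing that the fiber is a single Prym-torsor, rather than a disjoint union of such, is where the genericity of $s$ together with the determinant and trace constraints are genuinely used. Alternatively, connectedness of the generic fiber—and hence, by the argument above, of all fibers—can be extracted directly from Yokogawa's analysis of $\SM^{w}_{\op{Higgs}}(r,\alpha,d)$ in \cite{Y}, the same source that already underlies Lemma \ref{lemma:HitchinProper}, by restricting his description of the generic spectral fiber to the closed subscheme $\SM_{\op{Higgs}}(r,\alpha,\xi)$.
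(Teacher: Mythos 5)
Your argument is essentially the paper's proof: reduce everything via Stein factorization (the finite part $g$ is birational because the generic fiber of $H$ is connected, and then Zariski's Main Theorem over the normal variety $\SH_0$ forces $g$ to be an isomorphism onto its image, so all fibers of $H$ are fibers of $\widetilde H$), with connectedness of the generic fiber coming from its identification with a Prym variety. The one step you flag as the ``main obstacle''---the spectral/Prym description of the generic fiber in the strongly parabolic, fixed-determinant setting---is exactly what the paper imports wholesale from \cite[Lemmas 3.1 and 3.2]{GL}, so your plan is complete once that citation replaces your sketched verification.
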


\begin{proof}
By \cite[Lemma 3.1]{GL} and \cite[Lemma 3.2]{GL}, the fibers of \eqref{eq:HitchinMap} over a certain open dense subset $U$ of $\SH_0$ are isomorphic to a Prym variety, so each of those fibers are connected. Applying Stein factorization theorem \cite[Corollary 11.5]{Hartshorne77} to the projective morphism $H$ gives us an algebraic variety $\widetilde{\SH_0}$ and morphisms $\widetilde{H}$ and $g$ such that $\widetilde{H}$ has connected fibers, $g$ is a finite morphism and the following diagram commutes
\begin{eqnarray*}
\xymatrixcolsep{3pc}
\xymatrix{
\SM_{\op{Higgs}}(r,\alpha,\xi) \ar[r]^-{\widetilde{H}} \ar[dr]_H& {\widetilde{\SH_0}} \ar[d]_g \\
& \SH_0
}
\end{eqnarray*}
For every $p\in U$, $H^{-1}(p)$ is connected. The image of a connected set is connected, so $\widetilde{H}(H^{-1}(p))=g^{-1}(p)$ is connected. As $g$ is finite, $g^{-1}(p)$ must be a single point. Then $g$ is an isomorphism between $g^{-1}(U)$ and $U$, so $g$ is a birational map. Every finite morphism is projective, so by Zariski's Main Theorem \cite[Corollary 11.4]{Hartshorne77} a birational finite morphism to a normal variety is an isomorphism to an open set, so $g$ is an isomorphism to its image. Thus, every fiber of $H=\widetilde{H}\circ g$ is a fiber of $\widetilde{H}$ and must be connected.

\end{proof}

The multiplicative group $\CC^*$ acts on the moduli space $\SM_{\op{Higgs}}(r,\alpha,\xi)$ by
\begin{equation}
\label{eq:actionHiggs}
t\cdot (E,E_\bullet,\Phi)=(E,E_\bullet,t\Phi)
\end{equation}
The Hitchin map $H$ induces an associated $\CC^*$ action in $\SH$ given by

\begin{equation}
\label{eq:actionHitchin}
t\cdot (v_2,\ldots,v_i,\ldots,v_r) = (t^2 v_2,\ldots,t^iv_i,\ldots,t^r v_r)
\end{equation}
Where $v_i\in H^0(X,K^i D^{i-1})$ for $i\in {2,\ldots,r}$.

\begin{lemma} 
\label{lemma:HiggsSections}
Let $\alpha$ be a generic system of weights. Then the holomorphic tangent bundle
$$T\SM(r,\alpha,\xi)\longrightarrow \SM(r,\alpha,\xi)
$$
does not admit any nonzero holomorphic section.
\end{lemma}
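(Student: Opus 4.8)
The plan is to encode a global holomorphic section of $T\SM(r,\alpha,\xi)$ as a fiberwise-linear function on the cotangent bundle, identify the latter with the stable locus of the parabolic Higgs moduli space, and then use the Hitchin fibration to force the function to vanish. Since $\SM(r,\alpha,\xi)$ is projective, by GAGA a holomorphic section of its tangent bundle is algebraic, so one may argue throughout in the algebraic category.

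First, suppose $s$ is a nonzero section of $T\SM(r,\alpha,\xi)$. Via the canonical isomorphism \eqref{eq:Higgsdeformation}, $\SM_{\op{Higgs}}^{\sP}(r,\alpha,\xi)\cong T^*\SM(r,\alpha,\xi)$, I would define a regular function $f$ on $\SM_{\op{Higgs}}^{\sP}(r,\alpha,\xi)$ by pairing a Higgs field, viewed as a cotangent vector, against $s$; concretely $f(E,E_\bullet,\Phi)=\langle\Phi,s([E,E_\bullet])\rangle$. By construction $f$ is linear on each fiber of the forgetful map \eqref{eq:forgetfulHiggs}, so in terms of the action \eqref{eq:actionHiggs} it is homogeneous of weight one, $f(t\cdot m)=t\,f(m)$; moreover $s$ can be recovered from $f$ because the fiberwise pairing is perfect, so it suffices to prove $f\equiv 0$.

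Second, I would extend $f$ to a global regular function on the whole space $\SM_{\op{Higgs}}(r,\alpha,\xi)$. This is where the main work lies: one must show that the complement $\SM_{\op{Higgs}}(r,\alpha,\xi)\setminus\SM_{\op{Higgs}}^{\sP}(r,\alpha,\xi)$ — the locus of stable parabolic Higgs bundles whose underlying parabolic bundle fails to be stable — has codimension at least two. Granting this, since $\SM_{\op{Higgs}}(r,\alpha,\xi)$ is normal, Hartogs' extension theorem lets $f$ extend across the complement to a global regular function. The codimension estimate, obtained by bounding the dimension of the unstable-bundle locus using $g\ge 3$, is the principal obstacle; the alternative is to restrict $f$ to a generic Hitchin fiber (a Prym variety), where the same codimension issue reappears as the need for that fiber to meet the unstable locus in codimension at least two.

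Finally, I would run the Hitchin map $H$ of \eqref{eq:HitchinMap} against the weight grading. By Lemma \ref{lemma:HitchinProper} the map $H$ is projective and by Lemma \ref{lemma:HitchinConnected} its fibers are connected; exactly as in the Stein-factorization argument there, this gives $H_*\SO=\SO_{\SH_0}$, so every global regular function on $\SM_{\op{Higgs}}(r,\alpha,\xi)$ is the pullback $H^*g$ of a polynomial $g$ on the affine space $\SH_0$. Since $H$ is equivariant for the actions \eqref{eq:actionHiggs} and \eqref{eq:actionHitchin}, and every monomial in the coordinates of $\SH_0=\bigoplus_{i=2}^r H^0(K^iD^{i-1})$ has weight a sum of integers $\ge 2$, hence a weight in $\{0\}\cup\{2,3,\dots\}$, no nonzero function on $\SH_0$ is homogeneous of weight one. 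As $f=H^*g$ is homogeneous of weight one, it must vanish, whence $s=0$. In contrast with Lemma \ref{lemma:CotangentSections}, rationality alone does not suffice here, since rational varieties such as projective spaces carry many vector fields; it is the positivity of the Hitchin weights that kills the sections of $T\SM(r,\alpha,\xi)$.
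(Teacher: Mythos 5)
Your proposal follows the paper's proof almost step for step: contract $s$ to a fiberwise-linear holomorphic function on $T^*\SM(r,\alpha,\xi)\cong \SM^{\sP}_{\op{Higgs}}(r,\alpha,\xi)$ via \eqref{eq:Higgsdeformation}, extend it across the complement by Hartogs, note that it is homogeneous of weight one for the action \eqref{eq:actionHiggs}, push it down along the Hitchin map using Lemma \ref{lemma:HitchinProper} and Lemma \ref{lemma:HitchinConnected}, and kill it because every weight appearing in \eqref{eq:actionHitchin} on $\SH_0$ is at least two. The one step you leave open is exactly the one you flag as the principal obstacle: that the complement of $\SM^{\sP}_{\op{Higgs}}(r,\alpha,\xi)$ in $\SM_{\op{Higgs}}(r,\alpha,\xi)$ has codimension at least two. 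The paper does not obtain this by a dimension count of the unstable-bundle locus using $g\ge 3$; it quotes Faltings, taking $\SG=\op{SL}(\SO_X^{\oplus(r-1)}\oplus\xi)$ in \cite[Lemma II.6]{Faltings} together with \cite[V.(iii), page 561]{Faltings}, which delivers the codimension bound directly. So to complete your argument you should either cite that result or actually carry out an estimate of that locus; as written, ``granting this'' leaves the load-bearing step unproved, and it is the only genuine gap.

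Two minor remarks. The paper applies Hartogs using smoothness of $\SM_{\op{Higgs}}(r,\alpha,\xi)$, which holds because $\alpha$ is generic; your appeal to normality plus codimension two is an acceptable substitute. The GAGA reduction at the start is unnecessary --- the entire argument runs holomorphically, which is how the paper phrases it --- though it does no harm. Your closing observation, that rationality alone (as in Lemma \ref{lemma:CotangentSections}) cannot rule out tangent vector fields and that the positivity of the Hitchin weights is what does the work, is correct and accurately identifies why the two lemmas require different arguments.
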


\begin{proof}
A holomorphic section $s$ of $T\SM(r,\alpha,\xi)$ provides by contraction a holomorphic function
$$s^\sharp:T^*\SM(r,\alpha,\xi) \longrightarrow \CC$$
on the total space of the cotangent bundle which is linear on the fibers. Under the isomorphism in \eqref{eq:Higgsdeformation}, it corresponds to a holomorphic function
$$f:\SM_{\op{Higgs}}^{\sP}(r,\alpha,\xi)\longrightarrow \CC$$
Taking $\SG = \op{SL}(\SO_X^{\oplus (r-1)}\oplus \xi)$ in \cite[Lemma II.6]{Faltings} and \cite[V.(iii), page 561]{Faltings} we obtain that the codimension of $\SM_{\op{Higgs}}^{\sP}(r,\alpha,\xi)$ in $\SM_{\op{Higgs}}(r,\alpha,\xi)$ is grater than two.

As $\SM_{\op{Higgs}}(r,\alpha,\xi)$ is smooth, by Hartog's theorem, the function $f$ extends to a holomorphic function

$$\widetilde{f}: \SM_{\op{Higgs}}(r,\alpha,\xi) \longrightarrow \CC$$

Since $f$ is linear on the fibers we know that $\widetilde{f}$ must be homogeneous of degree 1 for the action \eqref{eq:actionHiggs} of $\CC^*$. On the moduli space $\SM_{\op{Higgs}}(r,\alpha,\xi)$ the Hitchin map \eqref{eq:HitchinMap} is projective by Lemma \ref{lemma:HitchinProper}, so it is proper, and its fibers are connected by Lemma \ref{lemma:HitchinConnected}. Therefore, the function $\widetilde{f}$ is constant on the fibers of the Hitchin map and $\widetilde{f}$ factors through a holomorphic function on the Hitchin space, which must still be homogeneous of degree 1.

On the other hand, there is no nonzero holomorphic homogeneous function of degree 1 on $\SH$, because all the exponents of $t$ in \eqref{eq:actionHitchin} are at least two. Therefore, $\widetilde{f}=0$ and we get $f=0$, $s^\sharp=0$ and, finally, $s=0$.
\end{proof}

\begin{corollary}
\label{cor:TangentHiggsSections}
Let $\alpha$ be a generic (in particular full flag) system of weights. The restriction of the holomorphic tangent bundle
$$T\SM_{\op{Higgs}}(r,\alpha,\xi) \longrightarrow \SM_{\op{Higgs}}(r,\alpha,\xi)
$$
to $i(\SM(r,\alpha,\xi))\subseteq \SM_{\op{Higgs}}(r,\alpha,\xi)$ does not admit any nonzero holomorphic section.
\end{corollary}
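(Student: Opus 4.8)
The plan is to reduce the statement to the two vanishing results already established, Lemma \ref{lemma:HiggsSections} and Lemma \ref{lemma:CotangentSections}, by analysing how the tangent bundle of $\SM_{\op{Higgs}}(r,\alpha,\xi)$ restricts to the image of the zero section $i$. First I would observe that, since $\alpha$ is generic, Corollary \ref{lemma:parabolicvbsmooth} gives $\SM(r,\alpha,\xi) = \SM^{\sP}(r,\alpha,\xi)$, so every point of $\SM(r,\alpha,\xi)$ is a \emph{stable} parabolic vector bundle. Consequently $(E,E_\bullet,0)$ always has stable underlying parabolic bundle, i.e. $i(\SM(r,\alpha,\xi)) \subseteq \SM_{\op{Higgs}}^{\sP}(r,\alpha,\xi)$. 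Because $\SM_{\op{Higgs}}^{\sP}(r,\alpha,\xi)$ is open in $\SM_{\op{Higgs}}(r,\alpha,\xi)$, the restriction of $T\SM_{\op{Higgs}}(r,\alpha,\xi)$ to $i(\SM(r,\alpha,\xi))$ coincides with the restriction of $T\SM_{\op{Higgs}}^{\sP}(r,\alpha,\xi)$ to the same locus.

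Next I would transport the problem through the canonical isomorphism \eqref{eq:Higgsdeformation}, under which $\SM_{\op{Higgs}}^{\sP}(r,\alpha,\xi) \stackrel{\sim}{\to} T^*\SM(r,\alpha,\xi)$ as varieties over $\SM(r,\alpha,\xi)$, and under which the embedding $i$ corresponds to the zero section of the cotangent bundle (a Higgs bundle with $\Phi=0$ is exactly the zero covector). Writing $M = \SM(r,\alpha,\xi)$ and $\pi: T^*M \to M$ for the projection, I would use the canonical exact sequence
$$0 \longrightarrow \pi^* T^*M \longrightarrow T(T^*M) \longrightarrow \pi^* TM \longrightarrow 0$$
whose pullback along the zero section $z: M \to T^*M$ splits canonically, since the differential $dz$ furnishes a holomorphic section of the projection onto $TM$. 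This yields a canonical holomorphic splitting
$$T(T^*M)|_{z(M)} \cong TM \oplus T^*M \quad .$$

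Given this splitting, any holomorphic section $s$ of $T\SM_{\op{Higgs}}(r,\alpha,\xi)|_{i(M)}$ decomposes as $s = (s_1,s_2)$ with $s_1 \in H^0(M, TM)$ and $s_2 \in H^0(M, T^*M)$. Lemma \ref{lemma:HiggsSections} forces $s_1=0$ and Lemma \ref{lemma:CotangentSections} forces $s_2=0$, hence $s=0$, as desired.

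The main point to handle carefully is the canonical splitting of the tangent bundle of the total space of the cotangent bundle along its zero section, together with the fact that \eqref{eq:Higgsdeformation} is an isomorphism \emph{over} $M$ carrying $i$ to the zero section. Both are standard, but they must be stated precisely so that the decomposition $s = (s_1,s_2)$ is genuinely holomorphic and the two lemmas apply to the respective components; once that identification is in place, the argument is immediate.
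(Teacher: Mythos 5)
Your proposal is correct and takes essentially the same approach as the paper: the paper's proof likewise handles the tangential direction by Lemma \ref{lemma:HiggsSections} and identifies the normal bundle of the embedding $i:\SM(r,\alpha,\xi)\to \SM_{\op{Higgs}}^{\sP}(r,\alpha,\xi)$ with $T^*\SM(r,\alpha,\xi)$ via the isomorphism \eqref{eq:Higgsdeformation}, concluding by Lemma \ref{lemma:CotangentSections}. Your explicit canonical splitting of $T(T^*M)$ along the zero section is a slightly more detailed version of this (the paper only needs left-exactness of the normal bundle sequence on global sections), but the decomposition and the two key lemmas are identical.
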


\begin{proof}
Using Lemma \ref{lemma:HiggsSections}, it suffices to show that the normal bundle of the embedding
$$i:\SM(r,\alpha,\xi) \longrightarrow \SM_{\op{Higgs}}^{\sP}(r,\alpha,\xi)$$
does not admit any nonzero holomorphic sections. The isomorphism in \eqref{eq:Higgsdeformation} allows us to identify this normal bundle with $T^*\SM(r,\alpha,\xi)$, so the lemma follows from Lemma \ref{lemma:CotangentSections}.
\end{proof}

We can adapt Simpson's result \cite[Lemma 11.9]{Simpson} to the parabolic situation and we obtain the following 

\begin{lemma}
\label{lemma:Simpson}
Let $(E,E_\bullet,\Phi)$ be a parabolic Higgs bundle in the nilpotent cone, with $\Phi\ne 0$. Assume that $(E,E_\bullet,\Phi)$ is a fixed point of the action \eqref{eq:actionHiggs}. Then there is another Higgs bundle $(E',E'_\bullet,\Phi')$ in the nilpotent cone, not isomorphic to $(E,E_\bullet,\Phi)$ such that $\lim_{t\to \infty} (E',E'_\bullet,t\Phi') = (E,E_\bullet,\Phi)$
\end{lemma}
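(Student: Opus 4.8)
\section*{Proof proposal}

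The plan is to realize the desired $(E',E'_\bullet,\Phi')$ as a point of the upward Bialynicki--Birula cell of the fixed point $F:=[(E,E_\bullet,\Phi)]$ for the action \eqref{eq:actionHiggs}. First recall that a fixed point with $\Phi\ne 0$ is a parabolic system of Hodge bundles: $(E,E_\bullet)$ splits, compatibly with the flags, as $\bigoplus_p (E^p,E^p_\bullet)$ with $\Phi(E^p)\subseteq E^{p-1}\otimes K(D)$, and $\Phi\ne 0$ forces at least two nonzero summands. The isomorphism $(E,t\Phi)\cong (E,\Phi)$ is implemented by the one-parameter subgroup $\lambda(t)=\bigoplus_p t^{-p}\,\op{Id}_{E^p}$ (one checks $\lambda(t)\Phi\lambda(t)^{-1}=t\Phi$), which therefore acts on the tangent space $T_F\SM_{\op{Higgs}}(r,\alpha,\xi)$ and splits it into $\CC^*$-weight spaces. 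I will use two facts. (i) By \eqref{eq:actionHitchin} the Hitchin map $H$ is $\CC^*$-equivariant with all weights $\ge 2$; hence any Higgs bundle whose limit as $t\to\infty$ exists in the moduli space must satisfy $H=0$, i.e.\ it lies automatically in the nilpotent cone. (ii) For generic $\alpha$, $\SM_{\op{Higgs}}(r,\alpha,\xi)$ is smooth and irreducible, and $F$ is a smooth point.

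Granting that $T_F$ contains at least one strictly negative $\CC^*$-weight, I would linearize the action in an analytic neighborhood of the smooth fixed point $F$. The unstable set $W^+_F=\{x:\lim_{t\to\infty}t\cdot x=F\}$ is then a locally closed smooth submanifold whose tangent space at $F$ is the sum of the negative weight spaces; in particular $\dim W^+_F\ge 1$. Choosing any point $x\in W^+_F$ with $x\ne F$, fact (i) places $x$ in the nilpotent cone, and $x\ne F$ as a point of the moduli space means $x$ is not isomorphic to $F$ as a parabolic Higgs bundle. Writing $x=[(E',E'_\bullet,\Phi')]$ gives exactly $\lim_{t\to\infty}(E',E'_\bullet,t\Phi')=(E,E_\bullet,\Phi)$.

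The statement thus reduces to showing that $F$, having $\Phi\ne 0$, admits a negative weight; equivalently, that $F$ is not a local minimum of the K\"ahler moment map $f=\lVert\Phi\rVert^2$ for the $S^1\subset\CC^*$ action. I would argue this by Morse theory of $f$, which (adapting the Hitchin--Simpson picture to the parabolic case, using that the parabolic Hitchin map is proper by Lemma \ref{lemma:HitchinProper}) is a proper, bounded-below, perfect Morse--Bott function on the smooth connected variety $\SM_{\op{Higgs}}(r,\alpha,\xi)$. Its absolute minimum locus is exactly $\{\Phi=0\}=i(\SM(r,\alpha,\xi))$, which is connected; by the connectedness of the minimal critical set of a Hamiltonian circle action this is the unique critical component of index $0$. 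Since $F$ has $\Phi\ne 0$ it is a different critical component, hence has positive index, i.e.\ a negative-weight direction, as required.

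The main obstacle is precisely this last step, certifying a negative weight at $F$, since that is where the global geometry enters; the rest is formal Bialynicki--Birula bookkeeping. If one prefers to avoid the analytic Morse theory, the same negative weight can be produced algebraically: the negative-weight part of $T_F=\mathbb{H}^1$ of the parabolic deformation complex $\ParEnd(E)\xrightarrow{[\Phi,\cdot]}\SParEnd(E)\otimes K(D)$ is computed from the cohomology of the strictly degree-raising sheaves $\bigoplus_{p<p'}\mathcal{H}om_{\mathrm{par}}(E^p,E^{p'})$, and for $r=2$ (the only case needed) one checks by parabolic Riemann--Roch, using $g\ge 3$, that this space is nonzero; concretely it parametrizes the nonsplit degree-raising deformations of the Hodge filtration, all of which degenerate to the split bundle $F$ as $t\to\infty$. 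Either route yields the lemma.
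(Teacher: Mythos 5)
Your proposal is correct in substance, but its primary route is genuinely different from the paper's. The paper in fact gives no argument at all: it simply invokes \cite[Lemma 11.9]{Simpson} and asserts that it adapts to the parabolic case, and Simpson's argument is essentially your \emph{fallback}, not your main route. At a fixed point one has a system of Hodge bundles, and the filtration-deforming classes in the cohomology of the appropriate graded piece of $\ParEnd(E,E_\bullet)$ give explicit Higgs bundles degenerating to the split one: for $r=2$, writing $\Phi\colon L_1 \to L_2\otimes K(D)$ on $E=L_1\oplus L_2$, a nonsplit parabolic extension $0\to L_2\to E'\to L_1\to 0$ carries the induced nilpotent field $\Phi'\colon E'\twoheadrightarrow L_1\xrightarrow{\Phi} L_2\otimes K(D)\hookrightarrow E'\otimes K(D)$, and scaling by $t$ rescales the extension class to $0$ as $t\to\infty$, so $\lim_{t\to\infty}(E',E'_\bullet,t\Phi')=(E,E_\bullet,\Phi)$; nonvanishing of the relevant $H^1$ follows from stability (the parabolic Hom sheaf in question has negative parabolic degree) together with Riemann--Roch, already for $g\ge 2$. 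Your main route --- Bialynicki--Birula theory at the smooth fixed point plus Morse theory of $f=\lVert\Phi\rVert^2$ --- buys a conceptual, rank-independent argument, but it is the more expensive one in the parabolic setting: it needs the hyperk\"ahler/moment-map package (Konno's construction of the parabolic moduli, properness of $f$, the Morse--Bott property), and properness of the parabolic Hitchin map (Lemma \ref{lemma:HitchinProper}) does \emph{not} supply properness of $f$ on the whole moduli space, only on the nilpotent cone; you flag this gap honestly, and your facts (i) and (ii) are fine. Three points to pin down in either route: the lemma as stated carries no genericity hypothesis, whereas your smoothness, linearization and BB steps all use that $\alpha$ is generic --- harmless, since the paper only applies the lemma under that hypothesis in Proposition \ref{prop:Simpson}, but it should be said; the sign bookkeeping in your fallback needs fixing, since whether the negative-weight directions are the ``degree-raising'' or ``degree-lowering'' Hom's depends on conventions --- concretely they are the $\bar{\partial}$-deformations by extensions of the piece $\Phi$ maps out of by the piece $\Phi$ maps into, as above; and in the fallback one must still check that $(E',E'_\bullet,\Phi')$ is stable and not isomorphic to $(E,E_\bullet,\Phi)$ --- the latter because any isomorphism of Higgs bundles matches $\ker\Phi'=L_2$ with $\ker\Phi=L_2$ and hence identifies the extensions, contradicting nonsplitness. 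With those repairs, your fallback is a complete proof and coincides with what the paper implicitly relies on, while your Morse-theoretic route is a valid but citation-heavy alternative.
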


The previous results combine in

\begin{proposition}
\label{prop:Simpson}
Let $\alpha$ be a full flag generic system of weights. Let $Z$ be an irreducible component of the fixed point locus of the action \eqref{eq:actionHiggs} in $\SM_{\op{Higgs}}(r,\alpha,\xi)$. Then

$$\dim Z \le (r^2-1)(g-1)+ \frac{n(r^2-r)}{2}
$$
with equality only for $Z=i(\SM(r,\alpha,\xi))$.
\end{proposition}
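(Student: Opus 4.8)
The plan is to study the $\CC^*$-action \eqref{eq:actionHiggs} at a fixed point via the Bialynicki-Birula weight decomposition of the tangent space, combined with the holomorphic symplectic structure on the smooth variety $\SM_{\op{Higgs}}(r,\alpha,\xi)$. Throughout write $N=(r^2-1)(g-1)+\frac{n(r^2-r)}{2}=\dim\SM(r,\alpha,\xi)$, so that $\dim\SM_{\op{Higgs}}(r,\alpha,\xi)=2N$. First I would record that every fixed point lies in the nilpotent cone: if $(E,E_\bullet,\Phi)$ is fixed, then applying the Hitchin map \eqref{eq:HitchinMap} and using that all the weights of the induced action \eqref{eq:actionHitchin} are at least $2$ forces $H(E,E_\bullet,\Phi)=0$. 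Since $\alpha$ is generic, $\SM_{\op{Higgs}}(r,\alpha,\xi)$ is smooth, hence so is its fixed-point locus, and $\dim Z$ equals the dimension of the weight-zero subspace $T_p^0$ of $T_p\SM_{\op{Higgs}}(r,\alpha,\xi)$ for any $p\in Z$.

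The key input is the holomorphic symplectic form $\omega$ on the smooth locus of the parabolic Higgs moduli space. On the dense open subset $\SM_{\op{Higgs}}^{\sP}\cong T^*\SM$ from \eqref{eq:Higgsdeformation} it is the canonical cotangent symplectic form, which is homogeneous of weight $1$ for the fiber scaling \eqref{eq:actionHiggs}; by density the relation $t^*\omega=t\,\omega$ then holds on all of $\SM_{\op{Higgs}}(r,\alpha,\xi)$. Decomposing $T_p\SM_{\op{Higgs}}(r,\alpha,\xi)=\bigoplus_m T_p^m$ into weight spaces, this relation shows that $\omega$ pairs $T_p^m$ nondegenerately with $T_p^{1-m}$; since $m=1-m$ has no integer solution, each weight space is paired with a distinct one, giving $\dim T_p^m=\dim T_p^{1-m}$. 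Summing over $m$ yields $\sum_{m\le 0}\dim T_p^m=\sum_{m\ge 1}\dim T_p^m=N$, whence
$$\dim Z=\dim T_p^0=N-\sum_{m<0}\dim T_p^m\le N,$$
with equality precisely when $p$ has no negative weights.

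It remains to identify the equality case. At a point of $i(\SM(r,\alpha,\xi))$ one has $\Phi=0$, and the tangent space splits as $H^1(\ParEnd(E,E_\bullet))\oplus H^0(\SParEnd(E,E_\bullet)\otimes K(D))$ carrying only weights $0$ and $1$; hence there are no negative weights and $\dim i(\SM(r,\alpha,\xi))=N$. Conversely, let $Z\ne i(\SM(r,\alpha,\xi))$ be any other component. Since the locus where $\Phi=0$ inside the fixed-point set is exactly $i(\SM(r,\alpha,\xi))$, the irreducible component $Z$ must contain a point $p=(E,E_\bullet,\Phi)$ with $\Phi\ne 0$. By Lemma \ref{lemma:Simpson} there is a parabolic Higgs bundle $(E',E'_\bullet,\Phi')$, not isomorphic to $p$ and hence not a fixed point, with $\lim_{t\to\infty}t\cdot(E',E'_\bullet,\Phi')=p$. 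This point lies in the attracting set of $p$ for $t\to\infty$, whose tangent directions at $p$ have strictly negative weight, so its existence forces $\sum_{m<0}\dim T_p^m>0$ and therefore $\dim Z<N$. Together with the previous paragraph this gives the asserted bound with equality only for $Z=i(\SM(r,\alpha,\xi))$.

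The main obstacle is the careful handling of the symplectic form rather than any single hard estimate: one must verify that $\omega$ is genuinely defined and weight-homogeneous over \emph{all} of $\SM_{\op{Higgs}}(r,\alpha,\xi)$, not merely on the cotangent locus $\SM_{\op{Higgs}}^{\sP}$ where the fixed points with $\Phi\ne 0$ need not lie, and then translate the flow furnished by Lemma \ref{lemma:Simpson} into the precise statement that a negative-weight tangent direction appears at $p$. Once these two points are secured, the dimension count and the characterization of the equality case follow formally.
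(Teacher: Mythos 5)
Your proof is correct, but it reaches the dimension bound by a genuinely different mechanism than the paper. The paper first observes (as you do) that the fixed locus lies in the nilpotent cone $H^{-1}(0)$, then quotes \cite[Theorem 3.14]{GGM07} to say that $H^{-1}(0)$ is a \emph{Lagrangian} subscheme of $\SM_{\op{Higgs}}(r,\alpha,\xi)$, which immediately gives $\dim Z\le \dim H^{-1}(0)=N$; equality forces $Z$ to be a whole irreducible component of the nilpotent cone that is pointwise fixed, and Lemma \ref{lemma:Simpson} rules out every component other than $i(\SM(r,\alpha,\xi))$ because those carry a nontrivial $\CC^*$-flow. You instead never use the Lagrangian property: you derive $\dim Z=\dim T_p^0\le N$ directly from the weight decomposition at a fixed point together with the fact that the holomorphic symplectic form has weight $1$, so that $\omega$ pairs $T_p^m$ perfectly with $T_p^{1-m}$, and you localize the equality analysis at a single point by converting Lemma \ref{lemma:Simpson} into the existence of a strictly negative weight in $T_p$. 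Both routes ultimately rest on the same two external inputs --- the holomorphic symplectic structure on the whole smooth moduli space (Konno's hyperk\"ahler construction, which is also what underlies the Lagrangian theorem of \cite{GGM07}) and Lemma \ref{lemma:Simpson} --- but your argument trades the global statement about the nilpotent cone for a pointwise linear-algebra computation, which is arguably more self-contained and makes the equality case more transparent, at the cost of two standard verifications you correctly flag: nondegeneracy of $\omega$ at fixed points \emph{outside} the cotangent locus $\SM_{\op{Higgs}}^{\sP}(r,\alpha,\xi)$ (density gives the homogeneity $t^*\omega=t\,\omega$ but not nondegeneracy, so here you must invoke the hyperk\"ahler construction, valid on all of the moduli space since generic weights make every semistable Higgs bundle stable), and the local linearization of the action near $p$ (Sumihiro plus a Luna slice, or analytic linearization) needed to conclude that a non-fixed point flowing to $p$ as $t\to\infty$ forces $\sum_{m<0}\dim T_p^m>0$. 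Neither is a gap in substance, since the paper's own proof leans on citations of comparable weight.
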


\begin{proof}
The $\CC^*$ action \eqref{eq:actionHiggs} on $\SM_{\op{Higgs}}(r,\alpha,\xi)$ and the $\CC^*$ action \eqref{eq:actionHitchin} on $\SH_0$ are intertwined by the Hitchin map $H$. Clearly the only fixed point in $\SH_0$ for this action is $0$, so $Z\subseteq H^{-1}(0)$.

By \cite[Theorem 3.14]{GGM07}, the fiber $H^{-1}(0)$ is a Lagrangian subscheme of $\SM_{\op{Higgs}}(r,\alpha,\xi)$ so $\dim H^{-1}(0)= \frac{1}{2}\dim \SM_{\op{Higgs}}(r,\alpha,\xi) =\dim \SM(r,\alpha,\xi)$ and so

$$\dim Z \le \dim \SM(r,\alpha,\xi)=(r^2-1)(g-1)+ \frac{n(r^2-r)}{2}
$$
and equality holds if $Z$ is an irreducible component of $H^{-1}(0)$.

Recall that $i:\SM(r,\alpha,\xi) \to H^{-1}(0)$ takes $E\to (E,0)$. Since a non-trivial $\CC^*$-action produces a non-trivial vector field, from Lemma \ref{lemma:HiggsSections} we know that $\SM(r,\alpha,\xi)$ does not admit any non-trivial $\CC^*$-action. As $\dim \SM(r,\alpha,\xi) =\dim H^{-1}(0)$, $i(\SM(r,\alpha,\xi))$ is an irreducible component of $H^{-1}(0)$ of the maximum allowed dimension, so it remains to check that there is no other irreducible component where there is no $\CC^*$-action.
The rest of the components have a nonzero Higgs field, so the $\CC^*$- action \eqref{eq:actionHiggs} $(E,\Phi)\mapsto (E,t\Phi)$ is non-trivial due to Lemma \ref{lemma:Simpson}.
\end{proof}

Using the previous Proposition we can obtain a proof of the Torelli theorem for the moduli space of parabolic Higgs bundles on a curve

\begin{corollary}
\label{cor:torelliHiggs}
Let $r=2$. Let $X$ be a smooth complex projective curve of genus $g\ge 3$. Let $D$ be a finite set of $n\ge 1$ distinct points over $X$ and let $\alpha$ be a concentrated generic (in particular full flag) system of weights over $D$. Let $\xi$ be a line bundle over $X$  such that $\deg(\xi)$ is coprime with $r$. The isomorphism class of the complex analytic space $\SM_{\op{Higgs}}(r,\alpha,\xi)$ determines uniquely the isomorphism class of the punctured Riemann surface $(X,D)$, meaning that if $\SM_{\op{Higgs}}(X,r,\alpha,\xi)$ is biholomorphic to $\SM_{\op{Higgs}}(X',r,\alpha',\xi')$ for another punctured connected Riemann surface $(X',D')$ of the same genus $g$ and a generic concentrated system of weights $\alpha'$ over $D'$, then $(X,D)\cong (X',D')$.
\end{corollary}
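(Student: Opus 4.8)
The plan is to reconstruct, from the mere biholomorphism type of $\SM_{\op{Higgs}}(r,\alpha,\xi)$, the embedded moduli space of parabolic vector bundles $i(\SM(r,\alpha,\xi))$, and then to feed the resulting isomorphism of bundle moduli spaces into the parabolic Torelli theorem of \cite{TorelliParabolic}. Accordingly, let $\Psi\colon \SM_{\op{Higgs}}(X,r,\alpha,\xi)\xrightarrow{\sim}\SM_{\op{Higgs}}(X',r,\alpha',\xi')$ be a biholomorphism, and write $d=(r^2-1)(g-1)+\tfrac{n(r^2-r)}{2}$, so that $\dim\SM(r,\alpha,\xi)=d$ and $\dim\SM_{\op{Higgs}}(r,\alpha,\xi)=2d$.

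The heart of the argument is an intrinsic, biholomorphism-invariant description of $i(\SM(r,\alpha,\xi))$. I would characterize it as the unique compact complex subvariety $Y\subseteq\SM_{\op{Higgs}}(r,\alpha,\xi)$ of maximal dimension $d$ whose restricted tangent bundle $T\SM_{\op{Higgs}}(r,\alpha,\xi)|_{Y}$ carries no nonzero holomorphic section. That $i(\SM)$ enjoys these properties is already available: it is projective of dimension $d$, smooth by Corollary \ref{lemma:parabolicvbsmooth}, and Corollary \ref{cor:TangentHiggsSections} says precisely that $T\SM_{\op{Higgs}}|_{i(\SM)}$ has no nonzero holomorphic section. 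For the dimension bound and the uniqueness I would exploit the Hitchin map $H$ of \eqref{eq:HitchinMap}: by Lemma \ref{lemma:HitchinProper} it is proper, and its base $\SH_0$ is affine, so any connected compact subvariety is contracted to a point and lies in a single fibre; since the Hitchin system is equidimensional with fibres of dimension $d$, this forces $\dim Y\le d$. A $d$-dimensional $Y$ must therefore exhaust an irreducible component of a fibre. If that fibre is generic, it is an abelian variety (a Prym, as used in Lemma \ref{lemma:HitchinConnected}), whose trivial tangent bundle injects into $T\SM_{\op{Higgs}}|_{Y}$ and yields nonzero sections, excluding this case; if $Y$ lies in the nilpotent cone $H^{-1}(0)$, then Proposition \ref{prop:Simpson} tells us that the only $d$-dimensional irreducible component there is $i(\SM)$ itself.

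Once this characterization is in place the transport step is immediate: compactness, dimension, and the vanishing of holomorphic sections of the restricted tangent bundle are all preserved under a biholomorphism, so $\Psi$ must send the distinguished subvariety of $\SM_{\op{Higgs}}(X,\ldots)$ onto that of $\SM_{\op{Higgs}}(X',\ldots)$. Hence $\Psi$ restricts to a biholomorphism $\SM(X,r,\alpha,\xi)\xrightarrow{\sim}\SM(X',r,\alpha',\xi')$. By Corollary \ref{lemma:parabolicvbsmooth} these are smooth projective varieties, and by Proposition \ref{prop:concentrated} the concentrated, coprime hypotheses make parabolic stability coincide with stability of the underlying bundle, so they are exactly the moduli spaces to which \cite{TorelliParabolic} applies. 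With the rank-two, coprimality and concentrated-weight assumptions met, the parabolic Torelli theorem of \cite{TorelliParabolic} then yields $(X,D)\cong(X',D')$, as desired.

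The main obstacle is the uniqueness half of the intrinsic characterization: one must show that \emph{every} $d$-dimensional compact subvariety other than $i(\SM)$ does carry a nonzero holomorphic section of the restricted tangent bundle. Via properness of $H$ this reduces to a fibrewise statement, which is clean for the generic (abelian-variety) fibres and for the nilpotent cone by Proposition \ref{prop:Simpson}, but it is delicate over the discriminant locus of $\SH_0$, where the rank-two Hitchin fibres degenerate and may acquire several $d$-dimensional components, some of which could a priori be rational-like. Controlling these requires the fine structure of the rank-two spectral/Prym description of the Hitchin fibration together with the fact that the $\CC^*$-flow of \eqref{eq:actionHiggs} degenerates every fibre onto the nilpotent cone, which is what I expect to let one relate the non-central components back to the components already classified by Proposition \ref{prop:Simpson}.
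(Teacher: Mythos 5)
Your existence half, transport step, and final appeal to \cite{TorelliParabolic} all match the paper, but the uniqueness half of your intrinsic characterization has a genuine gap, and you say so yourself. Your fibrewise strategy via the Hitchin map handles the generic (Prym) fibres and the nilpotent cone, but it breaks down over the discriminant locus of $\SH_0\setminus\{0\}$: there the fibres of \eqref{eq:HitchinMap} are compact of dimension $d$ and may have several irreducible components (compactified Jacobians of nodal spectral curves, components coming from non-reduced spectral curves, etc.) that are neither abelian varieties nor contained in $H^{-1}(0)$, and you produce no nonzero holomorphic section of $T\SM_{\op{Higgs}}(r,\alpha,\xi)|_{Y}$ on such a component --- the hoped-for argument via the $\CC^*$-flow does not work directly, since the action \eqref{eq:actionHiggs} does not preserve any fibre other than the nilpotent cone. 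You also assert equidimensionality of all Hitchin fibres, which the paper never establishes (it only cites \cite[Theorem 3.14]{GGM07} for $H^{-1}(0)$ being Lagrangian), so even your dimension bound $\dim Y\le d$ rests on an uncited input.

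The irony is that the hypothesis in your own characterization already closes the hole, which is exactly how the paper argues. The $\CC^*$-action \eqref{eq:actionHiggs} generates a global holomorphic vector field on the smooth variety $\SM_{\op{Higgs}}(r,\alpha,\xi)$; its restriction to $Y$ is a holomorphic section of $T\SM_{\op{Higgs}}(r,\alpha,\xi)|_{Y}$, hence vanishes identically by assumption. So every point of $Y$ has stabilizer with nontrivial tangent space at $1\in\CC^*$, forcing the stabilizer to be all of $\CC^*$, i.e.\ $Y$ lies in the fixed-point locus of \eqref{eq:actionHiggs}. Since the only fixed point of the intertwined action \eqref{eq:actionHitchin} on $\SH_0$ is the origin, this puts $Y\subseteq H^{-1}(0)$ with no case analysis at all, and Proposition \ref{prop:Simpson} then gives $Y=i(\SM(r,\alpha,\xi))$. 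This one observation subsumes your separate treatment of the generic fibres and makes the discriminant analysis (and even the compactness/single-fibre reduction via Lemma \ref{lemma:HitchinProper}) unnecessary in the uniqueness step; you should replace your fibrewise classification by it, keeping the rest of your argument as is.
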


\begin{proof}
Let $Z\subset \SM_{\op{Higgs}}(r,\alpha,\xi)$ be a closed analytic subset with the following two properties:

\begin{enumerate}
\item $Z$ is irreducible, smooth, and has complex dimension $(r^2-1)(g-1)+ \frac{n(r^2-r)}{2}$.
\item The restriction of the holomorphic tangent bundle $T\SM_{\op{Higgs}}(r,\alpha,\xi)$ to $Z\subset \SM_{\op{Higgs}}(r,\alpha,\xi)$ has no nonzero holomorphic sections.
\end{enumerate}

By Corollary \ref{cor:TangentHiggsSections}, the image
$i(\SM(r,\alpha,\xi))$ of the embedding $i$ in
\eqref{eq:embeddingHiggs} has these properties. We will prove that
this is the only possible choice for $Z$.

Every $\CC^*$ action on $\SM_{\op{Higgs}}(r,\alpha,\xi)$ defines a
holomorphic vector field on $\SM_{\op{Higgs}}(r,\alpha,\xi)$. The
second assumption on $Z$ says that any holomorphic vector field on
$\SM_{\op{Higgs}}(r,\alpha,\xi)$ vanishes on $Z$. Therefore, the
stabilizer of each point $Z\subset \SM_{\op{Higgs}}(r,\alpha,\xi)$ has
nontrivial tangent space at $1\in \CC^*$, and hence the stabilizer
must be the full group $\CC^*$.

Then $Z$ belongs to the fixed point locus of the action
\eqref{eq:actionHiggs} in $\SM_{\op{Higgs}}(r,\alpha,\xi)$. 
Due to Proposition \ref{prop:Simpson}, and property (1), 
$Z=i(\SM(r,\alpha,\xi))$. In particular, we have $Z\cong \SM(r,\alpha,\xi)$.

Then, the isomorphism class of $\SM_{\op{Higgs}}(r,\alpha,\xi)$
determines the isomorphism class of $\SM(r,\alpha,\xi)$. Due to
\cite[Theorem 3.2]{TorelliParabolic}, the latter determines the
isomorphism class of the punctured Riemann surface $(X,D)$.
\end{proof}

The rank two condition of the previous corollary is only necessary in order to apply the Torelli theorem in \cite{TorelliParabolic}. If the Theorem of \cite{TorelliParabolic} were extended to Higher rank, then Corollary \ref{cor:torelliHiggs} would also hold for higher rank with the same proof given above.

\section{The parabolic $\lambda$-connections}
\label{section:ParLambdaConn}

Let $\xi$ be a line bundle over $X$ and let $\alpha$ be a fixed full flag system of weights over $D$. Let us suppose that $\deg(\xi)=- \sum_{x\in D}\sum_{i=1}^r \alpha_i(x)$. Fixing a line bundle and a system of weights $\alpha$ over $X$ allows us to describe canonically a parabolic line bundle over $X$, $(\xi,\xi_{\beta}^x)$, taking the underlying vector bundle as $\xi$ and defining trivial filtrations over each $x\in D$ with parabolic weight
$$\beta(x):=\beta_1(x)=\sum_{i=1}^r \alpha_i(x)$$
As $\xi$ has rank one, any parabolic structure on $\xi$ consists of trivial filtrations. It is possible that for some $x\in D$, $\beta(x)\ge 1$. Taking into account the definition for the parabolic structure in terms of left continuous filtrations given by Simpson \cite{SimpsonNonCompact}, a parabolic line bundle $\xi$ with jumps at weights $\beta(x)$ for each $x\in D$ such that $\xi_{\beta(x)}^x=\xi(-x)$ is the same as a trivial filtration for the bundle
\begin{equation}
\xi \left (\sum_{x\in D} \lfloor \beta(x) \rfloor x \right)
\end{equation}
with parabolic weights $\{\beta(x)-\lfloor \beta(x) \rfloor\}_{x\in D}$.

Thus, the value of the jump $\beta(x)$ defines the parabolic structure on $\xi$ completely. By construction, we get that
$$\pdeg(\xi)=\deg(\xi) + \sum_{x\in D}\beta(x)=\deg(\xi)+\sum_{x\in D}\sum_{i=1}^r \alpha_i(x)=0$$
The line bundle $\xi$ can be given the structure of a parabolic Higgs bundle canonically taking a zero Higgs field. In fact, as the rank of $\xi$ is one, every traceless Higgs field over $\xi$ must be zero, so $\SM_{\op{Higgs}}(1,\beta,\xi)$ consists exactly of the point $(\xi,0)$.

Let $(E,E_\bullet,\Phi)$ be a traceless strongly parabolic Higgs
bundle with parabolic system of weights $\alpha$ such that
$\det(E)=\xi$. Taking the determinant of $(E,E_\bullet,\Phi)$,
we obtain a rank one parabolic Higgs bundle
$(\det(E),\det(E)_\bullet,\tr\Phi)$.
As $\tr(\Phi)=0$, the induced morphism is the zero
morphism.

Thus, taking the determinant, every parabolic Higgs bundles $[(E,E_\bullet,\Phi)]\in \SM_{\op{Higgs}}(r,\alpha,\xi)$ induces the same parabolic Higgs bundle $(\xi,\xi_\bullet,0)$.

Using the Simpson correspondence \cite{SimpsonNonCompact} between parabolic Higgs bundles of parabolic degree 0 and parabolic connections of parabolic degree 0, the parabolic Higgs bundle $(\xi,\xi_\bullet,0)$ corresponds to a parabolic connection $(\xi',\xi'_\bullet,\nabla_{\xi,\beta})$ with the same parabolic weights $\beta$, such that $\op{Res}(\nabla_{\xi,\beta},x)=\beta(x) \id$ for every $x\in D$.

Let $(E',E'_\bullet,\nabla)$ be the parabolic connection corresponding
to the Higgs bundle $(E,E_\bullet,\Phi)$ under the Simpson
correspondence. Taking the determinant of $(E',E'_\bullet,\nabla)$ we
obtain a rank one parabolic connection
$$
(\det(E),\det(E)_\bullet,\widetilde{\nabla}=\tr(\nabla)) \quad
.
$$
As the Simpson correspondence is an equivalence of categories
compatible with determinants \cite[Theorem 2]{SimpsonNonCompact}, the
determinant of $(E',E'_\bullet,\nabla)$ must be the image of
the determinant of $(E,E_\bullet,\Phi)$. Therefore, the
morphism $\widetilde{\nabla}$ must coincide with
$\nabla_{\xi,\beta}$. This leads up to the following definition of
parabolic $\lambda$-connection for the group $\op{SL}(r,\CC)$.

\begin{definition}
\label{def:parabolicLambdaConnection}
For a fixed line bundle $\xi$, a system of weights $\alpha$ and a given $\lambda \in \CC$ a parabolic $\lambda$-connection on $X$ (for the group $\op{SL}(r,\CC)$) is a triple $(E,E_\bullet,\nabla)$ where

\begin{enumerate}
\item $(E,E_\bullet)\longrightarrow X$ is a parabolic vector bundle of rank $r$ and weight system $\alpha$ together with an isomorphism $\bigwedge^r E\cong \xi$
\item $\nabla : E\to E\otimes K(D)$ is a $\CC$-linear homomorphism of sheaves over the underlying vector space of $E$ satisfying the following conditions
\begin{enumerate}
\item If $f$ is a locally defined holomorphic function on $X$ and $s$ is a locally defined holomorphic section of $E$ then
$$\nabla (fs) = f\cdot \nabla(s) + \lambda \cdot s\otimes df$$
\item For each $x\in D$ the homomorphism induced in the filtration over the fiber $E_x$ satisfies
$$\nabla(E_{x,i}) \subseteq E_{x,i} \otimes K(D)|_x$$
\item For every $x\in D$ and every $i=1,\ldots, r$ the action of $\op{Res}(\nabla,x)$ on $E_{x,i}/E_{x,i-1}$ is the multiplication by $\lambda \alpha_i(x)$. Since $\op{Res}(\nabla,x)$ preserves the filtration, it acts on each quotient.
\item The operator $\tr(\nabla):\bigwedge^r E \longrightarrow \left(\bigwedge^r E \right) \otimes K(D)$ induced by $\nabla$ coincides with $\lambda \cdot \nabla_{\xi,\beta}$.
\end{enumerate}
\end{enumerate}
\end{definition}

We say that a parabolic $\lambda$-connection $(E,E_\bullet,\nabla)$ on $X$ (for the group $\op{SL}(r,\CC)$) is stable (respectively semi-stable) if and only if for every parabolic subbundle $F\subsetneq E$ with the induced parabolic structure such that $\nabla(F)\subseteq F$
$$\op{par}\mu (F) < \op{par}\mu(E) \qquad (\text{respectively }\le )$$
We denote by $\SM_{\op{Hod}}(r,\alpha,\xi)$ the moduli space of semi-stable quadruples of the form $(\lambda,E,E_\bullet,\nabla)$, where $\lambda$ is a complex number and $(E,E_\bullet,\nabla)$ is a parabolic $\lambda$-connection. In the non-parabolic case, the Hodge moduli space was described by Deligne \cite{De} and it was constructed as an instance of a moduli space of $\Lambda$-modules by Simpson \cite{Si2}. For the construction of this moduli space in the parabolic scenario, see \cite{Al16}. The moduli space $\SM_{\op{Hod}}(r,\alpha,\xi)$ is a complex algebraic variety of dimension $1+2(g-1)(r^2-1)+n(r^2-r)$. It is equipped with a surjective algebraic morphism
\begin{equation}
\op{pr}_\lambda:\SM_{\op{Hod}}(r,\alpha,\xi) \longrightarrow \CC
\end{equation}
defined by $\op{pr}_\lambda(\lambda,E,E_\bullet,\nabla)=\lambda$.

Given a parabolic vector bundle $(E,E_\bullet)$, taking $\lambda=0$, a parabolic $0$-connection over $(E,E_\bullet)$ is a homomorphism of $\SO_X$-modules $\nabla:E\to E\otimes K(D)$ that preserves the filtration and such that for every $x\in D$, $\op{Res}(\nabla,x)$ acts as the zero morphism on $E_{x,i}/E_{x,i+1}$. Then, for every $x\in D$, $\nabla(E_{x,i})\subseteq E_{x,i+1}\otimes K(D)|_x$. Moreover, the induced morphism $\bigwedge^r E\to \bigwedge^r E\otimes K(D)$ is zero. As this morphism coincides locally with the trace of $\nabla$, $\nabla$ is a traceless morphism $\nabla:E\to E\otimes K(D)$. 

Thus, a $0$-connection is a traceless strongly parabolic Higgs bundle, so
$$\SM_{\op{Higgs}}(r,\alpha,\xi)=pr_\lambda^{-1}(0)\subset \SM_{\op{Hod}}(r,\alpha,\xi)$$

In particular, the embedding \eqref{eq:embeddingHiggs} of $\SM(r,\alpha,\xi)$ into $\SM_{\op{Higgs}}(r,\alpha,\xi)$ also gives an embedding of $\SM(r,\alpha,\xi)$ into $\SM_{\op{Hod}}(r,\alpha,\xi)$

\begin{equation}
\label{eq:embeddingHodge}
i:\SM(r,\alpha,\xi) \hookrightarrow \SM_{\op{Hod}}(r,\alpha,\xi)
\end{equation}
and the group $\CC^*$ acts on $\SM_{\op{Hod}}(r,\alpha,\xi)$ extending the $\CC^*$ action on $\SM_{\op{Higgs}}(r,\alpha,\xi)$ introduced in formula \eqref{eq:actionHiggs} by

\begin{equation}
\label{eq:actionHodge}
t\cdot (\lambda, E,E_\bullet,\nabla) = (t\cdot \lambda, E,E_\bullet, t\cdot \nabla)
\end{equation}

\begin{proposition}
\label{prop:SimpsonHodge}
Let $\alpha$ be a full flag generic system of weights. Let $Z$ be an irreducible component of the fixed point locus of the action \eqref{eq:actionHodge} in $\SM_{\op{Hod}}(r,\alpha,\xi)$. Then $\dim Z \le (r^2-1)(g-1)+\frac{n(r^2-r)}{2}$, with equality only for $Z=i(\SM(r,\alpha,\xi))$
\end{proposition}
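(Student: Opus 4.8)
The plan is to reduce the statement to Proposition \ref{prop:Simpson} by showing that the fixed-point locus of the action \eqref{eq:actionHodge} on $\SM_{\op{Hod}}(r,\alpha,\xi)$ is contained in, and hence equal to, the fixed-point locus of the action \eqref{eq:actionHiggs} on the Higgs fiber $\SM_{\op{Higgs}}(r,\alpha,\xi)=\op{pr}_\lambda^{-1}(0)$.

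First I would observe that the projection $\op{pr}_\lambda:\SM_{\op{Hod}}(r,\alpha,\xi)\to \CC$ is $\CC^*$-equivariant, where $\CC^*$ acts on the target by scalar multiplication. Indeed, directly from \eqref{eq:actionHodge} one has $\op{pr}_\lambda(t\cdot(\lambda,E,E_\bullet,\nabla))=t\lambda=t\cdot\op{pr}_\lambda(\lambda,E,E_\bullet,\nabla)$. Since the only fixed point of the scaling action of $\CC^*$ on $\CC$ is the origin, any fixed point of \eqref{eq:actionHodge} must satisfy $\op{pr}_\lambda=0$, that is, it lies in $\op{pr}_\lambda^{-1}(0)=\SM_{\op{Higgs}}(r,\alpha,\xi)$.

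Next I would note that on this fiber, where $\lambda=0$, the action \eqref{eq:actionHodge} specializes to $t\cdot(0,E,E_\bullet,\nabla)=(0,E,E_\bullet,t\nabla)$, which under the identification of $0$-connections with traceless strongly parabolic Higgs bundles ($\nabla=\Phi$) is precisely the action \eqref{eq:actionHiggs}. Consequently the fixed-point locus of \eqref{eq:actionHodge} in $\SM_{\op{Hod}}(r,\alpha,\xi)$ coincides, as a closed subscheme, with the fixed-point locus of \eqref{eq:actionHiggs} in $\SM_{\op{Higgs}}(r,\alpha,\xi)$. In particular their irreducible components agree, so every irreducible component $Z$ of the former is an irreducible component of the latter.

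Finally I would apply Proposition \ref{prop:Simpson} verbatim to such a $Z$, which yields $\dim Z\le (r^2-1)(g-1)+\frac{n(r^2-r)}{2}$ with equality only for $Z=i(\SM(r,\alpha,\xi))$. The only point requiring genuine justification, rather than bookkeeping, is the equivariance-and-rigidity argument pinning $\lambda=0$; once the fixed locus is confined to the Higgs fiber the statement becomes literally Proposition \ref{prop:Simpson}, so no further obstacle remains.
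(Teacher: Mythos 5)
Your proposal is correct and takes essentially the same route as the paper: its proof likewise observes that a point $(\lambda,E,E_\bullet,\nabla)$ can be fixed by \eqref{eq:actionHodge} only if $\lambda=0$, so that $Z\subseteq \op{pr}_\lambda^{-1}(0)=\SM_{\op{Higgs}}(r,\alpha,\xi)$, and then invokes Proposition \ref{prop:Simpson}. Your write-up merely makes explicit two steps the paper leaves implicit, namely the $\CC^*$-equivariance of $\op{pr}_\lambda$ and the identification of the restricted action on the fiber $\lambda=0$ with \eqref{eq:actionHiggs}.
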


\begin{proof}
A point $(\lambda,E,E_\bullet,\nabla)\in \SM_{\op{Hod}}(r,\alpha,\xi)$
can only be a fixed point of the action 
if $\lambda=0$. Then $Z\subseteq \SM_{\op{Higgs}}(r,\alpha,\xi)$. The
result follows from Proposition \ref{prop:Simpson}.
\end{proof}

A $\lambda$-connection in $\SM_{\op{Hod}}(r,\alpha,\xi)$ for $\lambda=1$ is a holomorphic flat connection on a parabolic vector bundle in the usual way \cite[\S 3]{Biswas}, that is, a logarithmic connection singular over $D$ such that the residue at every $x\in D$ restricted to $E_{x,i}/E_{x,i-1}$ is just the multiplication by $\alpha_i(x)$, so

$$ \SM_{\op{conn}}(r,\alpha,\xi) :=\op{pr}_\lambda^{-1}(1) \subset \SM_{\op{Hod}}(r,\alpha,\xi)
$$
is the moduli space of parabolic $\op{SL}(r,\CC)$-connections $(E,E_\bullet,\nabla)$ with weight system $\alpha$ and an isomorphism $\det(E) \cong \xi$. We denote by

$$\SM_{\op{conn}}^{\sP}(r,\alpha,\xi) \subset \SM_{\op{conn}}(r,\alpha,\xi) \quad \text{and} \quad \SM_{\op{Hod}}^{\sP}(r,\alpha,\xi)\subset \SM_{\op{Hod}}(r,\alpha,\xi)
$$
the Zariski open subvarieties where the underlying parabolic vector bundle is stable.

\begin{proposition}
\label{prop:connSections}
The forgetful map

\begin{equation}
\label{eq:forgetfulConnection}
\op{pr}_E : \SM_{\op{conn}}^{\sP}(r,\alpha,\xi) \longrightarrow \SM^{\sP}(r,\alpha,\xi)
\end{equation}
defined by $\op{pr}_E(E,\nabla)=E$ admits no holomorphic section.
\end{proposition}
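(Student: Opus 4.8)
The plan is to argue by contradiction using the analytic structure of the de Rham moduli space. Suppose $s\colon \SM^{\sP}(r,\alpha,\xi)\to \SM_{\op{conn}}^{\sP}(r,\alpha,\xi)$ is a holomorphic section of $\op{pr}_E$. By Corollary \ref{lemma:parabolicvbsmooth} the base $\SM^{\sP}(r,\alpha,\xi)=\SM(r,\alpha,\xi)$ is a smooth \emph{projective} variety, in particular compact and connected, of dimension $(g-1)(r^2-1)+\tfrac{n(r^2-r)}{2}\ge 1$. Since $\op{pr}_E\circ s=\id$, the map $s$ is injective, so its image $W=s(\SM(r,\alpha,\xi))$ is a compact connected analytic subset of $\SM_{\op{conn}}^{\sP}(r,\alpha,\xi)\subset \SM_{\op{conn}}(r,\alpha,\xi)$ with $\dim W=\dim\SM(r,\alpha,\xi)\ge 1$. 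The whole argument thus reduces to showing that $\SM_{\op{conn}}(r,\alpha,\xi)$ admits no positive-dimensional compact analytic subset.

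To see this I would invoke the tame parabolic Riemann--Hilbert correspondence: sending a flat parabolic $\op{SL}(r,\CC)$-connection $(E,E_\bullet,\nabla)$ to the monodromy of $\nabla$ on $X\setminus D$ yields a biholomorphism between $\SM_{\op{conn}}(r,\alpha,\xi)$ and the Betti moduli space $\SM_{\op{B}}$ of representations $\rho\colon \pi_1(X\setminus D)\to \op{SL}(r,\CC)$ whose local monodromy around each $x\in D$ is constrained to the conjugacy class determined by $\exp(-2\pi i\,\op{Res}(\nabla,x))$, taken modulo conjugation. This is exactly the correspondence developed in Section \ref{section:ParDH}. The key point is that $\SM_{\op{B}}$ is an \emph{affine} variety, being a GIT quotient of a closed affine subvariety of a product of copies of $\op{SL}(r,\CC)$ by the conjugation action; hence $\SM_{\op{conn}}(r,\alpha,\xi)$ is biholomorphic to a Stein space. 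A Stein space carries enough global holomorphic functions to separate points, and such functions are constant on any compact connected analytic subset by the maximum principle; therefore every compact connected analytic subset of $\SM_{\op{conn}}(r,\alpha,\xi)$ is a single point. This contradicts $\dim W\ge 1$, and the proposition follows.

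The step I expect to be the main obstacle is the careful invocation of the parabolic Riemann--Hilbert correspondence with \emph{fixed} determinant and \emph{fixed} residual conjugacy classes, together with the verification that the resulting Betti moduli space is genuinely affine: one must check that the tame correspondence matches the (semi)stability conditions on both sides, so that $W\subset \SM_{\op{conn}}^{\sP}$ indeed lands in $\SM_{\op{B}}$, and that fixing the conjugacy classes at the punctures and the determinant cuts out a closed affine subscheme before passing to the affine GIT quotient.

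Alternatively, and avoiding Riemann--Hilbert, one can argue structurally: a direct computation with the conditions of Definition \ref{def:parabolicLambdaConnection} shows that the difference of two $1$-connections on a fixed $(E,E_\bullet)$ is a traceless strongly parabolic Higgs field, so that $\op{pr}_E$ is a holomorphic torsor over the cotangent bundle $T^*\SM(r,\alpha,\xi)$ (using \eqref{eq:Higgsdeformation}), the fibers being nonempty because $\pdeg(E)=0$. Such a torsor is classified by a class $\eta\in H^1(\SM(r,\alpha,\xi),\Omega^1)$ that vanishes precisely when a holomorphic section exists; identifying $\eta$ with the first Chern class of the ample determinant (theta) line bundle on the projective variety $\SM(r,\alpha,\xi)$ shows $\eta\ne 0$. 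In this approach Lemma \ref{lemma:CotangentSections} (absence of nonzero holomorphic $1$-forms) serves as a consistency check, since it shows that a section would in any case be unique.
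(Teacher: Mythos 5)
Your main argument is essentially the paper's own proof: both compose a putative section $s$ with the holomorphic, injective Riemann--Hilbert monodromy map into the affine character variety $\Hom(\pi_1(X\setminus D,x_0),\GL(r,\CC))\gitq \GL(r,\CC)$ and use the compactness of $\SM^{\sP}(r,\alpha,\xi)=\SM(r,\alpha,\xi)$ (Corollary \ref{lemma:parabolicvbsmooth}) to force constancy, the only differences being cosmetic --- the paper works with $\GL(r,\CC)$-representations, so none of the $\op{SL}(r,\CC)$/fixed-conjugacy-class bookkeeping (or any $\beta(x)\in\ZZ$ hypothesis) you flag is needed, injectivity being supplied by \cite{Ka76} together with the full-flag hypothesis, and the conclusion is phrased as constancy of $\op{RH}^{-1}\circ s$ rather than via Steinness killing the compact image $W$. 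Your alternative torsor sketch is a genuinely different route, but its key step --- identifying the obstruction class $\eta\in H^1(\SM(r,\alpha,\xi),\Omega^1)$ of the affine bundle $\op{pr}_E$ with the ample theta class --- is asserted rather than proved, so as written it is only the first argument that constitutes a complete proof, and that one matches the paper's.
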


\begin{proof}
Let us suppose that there exist a holomorphic section
$$s:\SM^{\sP}(r,\alpha\xi)\longrightarrow \SM_{\op{conn}}^{\sP}(r,\alpha,\xi)$$
By the Riemann-Hilbert correspondence, the map that sends each parabolic connection to its monodromy is a holomorphic map from $\SM_{\op{conn}}^{\sP}(r,\alpha,\xi)$ to an algebraically closed subset of the moduli space of irreducible representations of the fundamental group of $X\backslash D$ in $\GL(r,\CC)$. In particular, given a base point $x_0\in X\backslash D$, we obtain a holomorphic map (see Section \ref{section:ParDH} for the details)
$$\op{RH}^{-1}:\SM_{\op{conn}}^{\sP}(r,\alpha,\xi)\longrightarrow \Hom(\pi_1(X\backslash D,x_0),\GL(r,\CC))\gitq \GL(r,\CC)$$
The latter is an affine variety because the GIT quotient of an affine variety is affine and $\Hom(\pi_1(X\backslash D,x_0),\GL(r,\CC))$ is a closed subset of $\mathbb{C}^{(r^2+1)(2g+n)}$. On the other hand, as we assumed generic parabolic weights, $\SM^{\sP}(r,\alpha,\xi)=\SM(r,\alpha,\xi)$ is compact as an analytic variety. Therefore,
$$\op{RH}^{-1}\circ s:\SM^{\sP}(r,\alpha,\xi) \longrightarrow \Hom(\pi_1(X\backslash D,x_0),\GL(r,\CC))\gitq \GL(r,\CC)$$
is a holomorphic morphism from a compact variety to an affine variety and it must be constant. As the parabolic data has been fixed, by \cite{Ka76} for every representation of the fundamental group in the image of $\op{RH}^{-1}$ there exist a unique logarithmic connection on $X$ with the prescribed monodromy and residual data. Moreover, the parabolic structure is assumed to be full flag, so there is a single parabolic structure on the underlying vector bundle compatible with the logarithmic connection.Therefore, the morphism $\op{RH}^{-1}:\SM_{\op{conn}}^{\sP}(r,\alpha,\xi)\longrightarrow \Hom(\pi_1(X\backslash D,x_0),\GL(r,\CC))\gitq \GL(r,\CC)$ is injective, so $s$ must be constant and we conclude that it is not a section of $\op{pr}_E$.

\end{proof}

The forgetful maps \eqref{eq:forgetfulConnection} and \eqref{eq:forgetfulHiggs} can be both seen as restrictions to $\op{pr}_\lambda^{-1}(0)$ and $\op{pr}_\lambda^{-1}(1)$ respectively of a map

\begin{equation}
\label{eq:forgetfulHodge}
\op{pr}_E: \SM_{\op{Hod}}^{\sP}(r,\alpha,\xi) \longrightarrow \SM^{\sP}(r,\alpha,\xi)
\end{equation}
defined by $\op{pr}_E(\lambda,E,E_\bullet,\nabla)=(E,E_\bullet)$.

\begin{corollary}
\label{cor:HodgeSections}
Let $\alpha$ be a generic (in particular full flag) system of weights. The only holomorphic map
$$s:\SM(r,\alpha,\xi)\longrightarrow \SM_{\op{Hod}}^{\sP}(r,\alpha,\xi)$$
with $\op{pr}_E \circ s =\id$ is the restriction of the embedding $i$ defined in \eqref{eq:embeddingHodge}
$$i:\SM(r,\alpha,\xi)\hookrightarrow \SM_{\op{Hod}}^{\sP}(r,\alpha,\xi)$$
\end{corollary}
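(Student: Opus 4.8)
The plan is to first pin down the image of $s$ to a single fibre of $\op{pr}_\lambda$, then split into the cases $\lambda=0$ and $\lambda\neq 0$, treating the former with Lemma \ref{lemma:CotangentSections} and reducing the latter to Proposition \ref{prop:connSections} by a rescaling.

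First I would note that $\SM(r,\alpha,\xi)$ is compact and connected: it is smooth projective, and it is rational (hence irreducible) by the argument in the proof of Lemma \ref{lemma:CotangentSections}. Consequently the composition $\op{pr}_\lambda\circ s:\SM(r,\alpha,\xi)\to\CC$ is a holomorphic function on a compact connected complex variety, so it is constant, equal to some $\lambda_0\in\CC$. Thus $s$ factors through the slice $\op{pr}_\lambda^{-1}(\lambda_0)\cap\SM_{\op{Hod}}^{\sP}(r,\alpha,\xi)$.

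Next I would dispose of the case $\lambda_0\neq 0$ using the rescaling provided by the $\CC^*$-action \eqref{eq:actionHodge}, which sends $(\lambda,E,E_\bullet,\nabla)$ to $(t\lambda,E,E_\bullet,t\nabla)$ and in particular fixes the underlying parabolic bundle $(E,E_\bullet)$. One checks directly that $t\nabla$ is a genuine $(t\lambda)$-connection (the Leibniz rule, residue condition and trace condition of Definition \ref{def:parabolicLambdaConnection} all scale correctly by $t$) and that $\nabla$-invariance of subbundles is unchanged, so the action preserves $\SM_{\op{Hod}}^{\sP}(r,\alpha,\xi)$ and leaves $\op{pr}_E$ invariant. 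Acting with $t=1/\lambda_0$, the map $t\cdot s$ is again a holomorphic section of $\op{pr}_E$, but it now lands in $\op{pr}_\lambda^{-1}(1)=\SM_{\op{conn}}^{\sP}(r,\alpha,\xi)$. This contradicts Proposition \ref{prop:connSections}, which asserts that the forgetful map \eqref{eq:forgetfulConnection} admits no holomorphic section. Hence $\lambda_0=0$.

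Finally, with $\lambda_0=0$ the map $s$ lands in $\SM_{\op{Higgs}}^{\sP}(r,\alpha,\xi)$, which by the canonical isomorphism \eqref{eq:Higgsdeformation} is identified, compatibly with the projections to $\SM(r,\alpha,\xi)$, with the total space of $T^*\SM(r,\alpha,\xi)$. Under this identification the section $s$ becomes a holomorphic section of the cotangent bundle, that is, a holomorphic $1$-form on $\SM(r,\alpha,\xi)$; by Lemma \ref{lemma:CotangentSections} the only such form is $0$, so $s$ is the zero section, which is precisely the embedding $i$ of \eqref{eq:embeddingHodge}. The only step demanding genuine care is verifying that the rescaled map $t\cdot s$ is still a holomorphic section of $\op{pr}_E$ landing in the stable locus $\SM_{\op{conn}}^{\sP}$; everything else is formal once one observes that the action fixes $(E,E_\bullet)$ and carries $\lambda$-connections to $(t\lambda)$-connections.
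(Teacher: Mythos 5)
Your proposal is correct and follows essentially the same route as the paper's proof: constancy of $\op{pr}_\lambda\circ s$ by compactness, elimination of the nonzero value via Proposition \ref{prop:connSections}, and identification of the $\lambda_0=0$ case with a holomorphic $1$-form killed by Lemma \ref{lemma:CotangentSections}. The only difference is presentational: the paper compresses your rescaling argument into the phrase ``up to the $\CC^*$ action in \eqref{eq:actionHodge}, we may assume that this constant is either $0$ or $1$,'' whereas you verify explicitly that the action fixes $(E,E_\bullet)$ and preserves sections of $\op{pr}_E$ --- a worthwhile check, but the same proof.
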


\begin{proof}
The composition
$$\SM(r,\alpha,\xi) \stackrel{s}{\longrightarrow} \SM^{\sP}_{\op{Hod}}(r,\alpha,\xi) \stackrel{\op{pr}_\lambda}{\longrightarrow} \CC
$$
is a holomorphic function on $\SM(r,\alpha,\xi)$. Since the later is compact, it is a constant function. Up to the $\CC^*$ action in \eqref{eq:actionHodge}, we may assume that this constant is either $0$ or $1$.

If this constant were 1, then $s$ would factor through $\op{pr}_\lambda^{-1}(1)=\SM_{\op{conn}}^{\sP}(r,\alpha,\xi)$, which would contradict Proposition \ref{prop:connSections}.
Hence this constant is 0, and $s$ factors through $\op{pr}_\lambda^{-1}(0)=\SM_{\op{Higgs}}^{\sP}(r,\alpha,\xi)$. Thus, under isomorphism \eqref{eq:Higgsdeformation} $s$ corresponds to a holomorphic global section of $T^*\SM(r,\alpha,\xi)$. But due to Lemma \ref{lemma:CotangentSections}, $s$ is zero, so it must be the restriction of the canonical embedding $i$ in \eqref{eq:embeddingHodge}.
\end{proof}

\begin{corollary}
\label{cor:TangentHodgeSections}
Let $\alpha$ be a generic (in particular full flag) system of weights. Then every semi-stable $\lambda$-connection is stable. As the set of stable $\lambda$-connections lie in the smooth locus of $\SM_{\op{Hod}}(r,\alpha,\xi)$, the later is smooth. The restriction of the holomorphic tangent bundle
$$T\SM_{\op{Hod}}(r,\alpha,\xi) \longrightarrow \SM_{\op{Hod}}(r,\alpha,\xi)$$
to $i(\SM(r,\alpha,\xi))\subset \SM_{\op{Hod}}(r,\alpha,\xi)$ does not admit any nonzero holomorphic section.
\end{corollary}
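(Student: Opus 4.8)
\emph{Overview.} The statement bundles three claims: every semistable $\lambda$-connection is stable, $\SM_{\op{Hod}}(r,\alpha,\xi)$ is smooth, and the tangent bundle has no sections along $i(\SM(r,\alpha,\xi))$. The first two are quick and all the content sits in the third. For the first two I would repeat the genericity argument of Proposition \ref{prop:concentrated}: if $(E,E_\bullet,\nabla)$ were strictly semistable there would be a $\nabla$-invariant parabolic subbundle $F\subsetneq E$ with $\op{par}\mu(F,F_\bullet)=\op{par}\mu(E,E_\bullet)$, and since invariant subbundles form a subset of all parabolic subbundles this slope equality produces exactly the integral relation of Proposition \ref{prop:concentrated}, forcing $\alpha\in A$ and contradicting genericity. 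Hence semistable $=$ stable, and as the stable locus lies in the smooth locus the whole space is smooth, so $T\SM_{\op{Hod}}(r,\alpha,\xi)$ is a genuine vector bundle.

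\emph{Peeling off the obvious directions.} Since $i(\SM(r,\alpha,\xi))\subset \SM_{\op{Hod}}^{\sP}(r,\alpha,\xi)$ and $i$ is a section of the smooth forgetful map $\op{pr}_E$ of \eqref{eq:forgetfulHodge}, the differential $di$ splits the relative tangent sequence of $\op{pr}_E$ restricted to $i(\SM(r,\alpha,\xi))$, giving a holomorphic splitting
$$T\SM_{\op{Hod}}(r,\alpha,\xi)\big|_{i(\SM(r,\alpha,\xi))}\;\cong\; T\SM(r,\alpha,\xi)\,\oplus\, N,$$
where $N=i^*T_{\op{pr}_E}$ is the normal bundle of $i$, i.e. the vertical tangent bundle of $\op{pr}_E$ along $i(\SM(r,\alpha,\xi))$. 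By Lemma \ref{lemma:HiggsSections} the first summand has no nonzero sections, so it suffices to prove $H^0(\SM(r,\alpha,\xi),N)=0$.

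\emph{Splitting the normal bundle.} Inside a fibre of $\op{pr}_E$ the sublocus $\op{pr}_\lambda=0$ consists of the strongly parabolic Higgs fields on a fixed bundle, whose tangent space is $H^0(\SParEnd(E,E_\bullet)\otimes K(D))\cong T^*_{[(E,E_\bullet)]}\SM(r,\alpha,\xi)$ by \eqref{eq:Higgsdeformation}, while the remaining direction is detected by $d\op{pr}_\lambda$. This gives
$$0\longrightarrow T^*\SM(r,\alpha,\xi)\longrightarrow N \stackrel{d\op{pr}_\lambda}{\longrightarrow} \SO_{\SM(r,\alpha,\xi)}\longrightarrow 0.$$
As $H^0(T^*\SM(r,\alpha,\xi))=0$ by Lemma \ref{lemma:CotangentSections}, $d\op{pr}_\lambda$ embeds $H^0(N)$ into $H^0(\SO)=\CC$: any section $s$ of the restricted tangent bundle has constant $\lambda$-component $c:=d\op{pr}_\lambda(s)$, and if $c=0$ then $s$ lies in $T\SM(r,\alpha,\xi)\oplus T^*\SM(r,\alpha,\xi)=T\SM_{\op{Higgs}}(r,\alpha,\xi)|_{i(\SM(r,\alpha,\xi))}$ and vanishes by Corollary \ref{cor:TangentHiggsSections}. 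Everything reduces to excluding a section with $c\neq 0$, equivalently to showing the displayed extension is non-split, i.e. that the connecting map $\delta:\CC=H^0(\SO)\to H^1(T^*\SM(r,\alpha,\xi))$ is injective.

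\emph{The crux.} The step I expect to be the main obstacle is identifying $\delta(1)\in H^1(T^*\SM(r,\alpha,\xi))$ with the class of the affine bundle $\op{pr}_E:\SM_{\op{conn}}^{\sP}(r,\alpha,\xi)\to \SM(r,\alpha,\xi)$ viewed as a torsor under the vector bundle $T^*\SM(r,\alpha,\xi)$. Indeed, each fibre $\op{pr}_\lambda^{-1}(\lambda)$ of $\op{pr}_E$ is such a torsor; at $\lambda=0$ it is the trivial torsor (with zero section $i$), and the $\CC^*$-action \eqref{eq:actionHodge}, scaling $\lambda$ and $\nabla$ with weight one, shows that the torsor class at parameter $\lambda$ depends linearly on $\lambda$. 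Its derivative at $\lambda=0$ — the first-order datum recorded by $\delta(1)$ — therefore equals the torsor class at $\lambda=1$. By Proposition \ref{prop:connSections} (equivalently Corollary \ref{cor:HodgeSections}) the map $\op{pr}_E$ over $\op{pr}_\lambda^{-1}(1)=\SM_{\op{conn}}^{\sP}(r,\alpha,\xi)$ admits no holomorphic section, so this torsor is non-trivial and its class is nonzero; hence $\delta$ is injective and $H^0(N)=0$. The delicate point to make rigorous is precisely this identification, namely that the extension class of $N$ genuinely computes the first-order variation of the torsor structure in the $\lambda$-direction.
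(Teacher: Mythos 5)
Your argument is correct and, once unwound, it is the paper's proof in cohomological clothing: both reduce to showing the normal bundle $\SN$ of $i$ has no nonzero holomorphic sections, and both ultimately rest on exactly the two inputs you invoke, Lemma \ref{lemma:CotangentSections} and Proposition \ref{prop:connSections}. The difference is packaging, and it is worth comparing because the paper's packaging is precisely what closes the step you flag as delicate. The paper does not pass through the extension class $\delta(1)$: it observes (isomorphism \eqref{eq:HodgeNormal}) that sending $(\lambda,E,E_\bullet,\nabla)$ to the derivative at $t=0$ of $t\mapsto (t\lambda,E,E_\bullet,t\nabla)$ gives a \emph{canonical isomorphism} $\SM_{\op{Hod}}^{\sP}(r,\alpha,\xi)\cong \SN$ of varieties over $\SM(r,\alpha,\xi)$, so a holomorphic section of $\SN$ is literally a section of $\op{pr}_E$, which Corollary \ref{cor:HodgeSections} (compactness forces $\op{pr}_\lambda\circ s$ constant, the $\CC^*$-action \eqref{eq:actionHodge} rescales it to $0$ or $1$, then the two cited results apply) shows must be $i$. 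The reason this isomorphism exists --- and the rigorous content behind your heuristic that "the torsor class depends linearly on $\lambda$" --- is that each fiber of $\op{pr}_E$ in \eqref{eq:forgetfulHodge} is naturally a vector space: if $\nabla$ is a $\lambda$-connection and $\nabla'$ a $\lambda'$-connection on the same $(E,E_\bullet)$, then $a\nabla+b\nabla'$ is an $(a\lambda+b\lambda')$-connection, since the Leibniz rule, the residue condition and the trace condition of Definition \ref{def:parabolicLambdaConnection} are all linear in the pair $(\lambda,\nabla)$. Hence $\SM_{\op{Hod}}^{\sP}(r,\alpha,\xi)$ is the total space of a vector bundle over $\SM(r,\alpha,\xi)$, canonically identified with its own normal bundle along the zero section $i$; under this identification your sequence $0\to T^*\SM(r,\alpha,\xi)\to \SN\to \SO\to 0$ is just $d\op{pr}_\lambda$, and $\op{pr}_\lambda^{-1}(1)=\SM_{\op{conn}}^{\sP}(r,\alpha,\xi)$ \emph{is} the affine subbundle $(d\op{pr}_\lambda)^{-1}(1)$ on the nose, so $\delta(1)$ is its torsor class with no limiting argument needed. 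Three small repairs: the genericity argument for "semistable $=$ stable" is the one in the first proposition of Section \ref{section:ParVB} (the set $A$), not Proposition \ref{prop:concentrated} (which concerns concentrated weights), and it applies verbatim because a destabilizing $\nabla$-invariant subbundle carries the induced full-flag structure; the exactness of your sequence at $\SO$ (surjectivity of $d\op{pr}_\lambda$) needs every stable parabolic bundle of parabolic degree zero to carry some $1$-connection, which follows from the Mehta--Seshadri/Simpson correspondence (though for the vanishing $H^0(\SN)=0$ even this is harmless: failure of surjectivity would only make the conclusion easier); and in your $c=0$ case Lemma \ref{lemma:CotangentSections} alone suffices, since a section of $\SN$ killed by $d\op{pr}_\lambda$ is already a section of $T^*\SM(r,\alpha,\xi)$, so Corollary \ref{cor:TangentHiggsSections} need not be re-invoked.
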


\begin{proof}
Let $\SN$ be the holomorphic normal bundle of the restricted embedding
$$i:\SM(r,\alpha,\xi) \hookrightarrow \SM_{\op{Hod}}(r,\alpha,\xi)$$
Due to Lemma \ref{lemma:HiggsSections}, it suffices to show that this vector bundle $\SN$ over $\SM(r,\alpha,\xi)$ has no nonzero holomorphic sections.
One has a canonical isomorphism
\begin{equation}
\label{eq:HodgeNormal}
\SM_{\op{Hod}}(r,\alpha,\xi) \stackrel{\sim}{\longrightarrow} \SN
\end{equation}
of varieties over $\SM^s(r,\alpha,\xi)$, defined by sending any $(\lambda,E,E_\bullet,\nabla)$ to the derivative at $t=0$ of the map $\CC\longrightarrow \SM_{\op{Hod}}(r,\alpha,\xi)$ given by

$$t\longmapsto (t\cdot \lambda, E,E_\bullet, t\cdot \nabla)$$

Using this morphism, from Corollary \ref{cor:HodgeSections} we conclude that $\SN$ does not have any nonzero holomorphic sections.
\end{proof}

\begin{corollary}
\label{cor:torelliHodge}
Let $r=2$.  Let $X$ be a smooth complex projective curve of genus $g\ge 3$. Let $D$ be a finite set of $n\ge 1$ distinct points over $X$ and let $\alpha$ be a concentrated generic (in particular full flag) system of weights over $D$. Let $\xi$ be a line bundle over $X$  such that $\deg(\xi)=-\sum_{x\in D}\sum_{i=1}^r \alpha_i(x)$ is coprime with $r$. The isomorphism class of the complex analytic space $\SM_{\op{Hod}}(r,\alpha,\xi)$ determines uniquely the isomorphism class of the punctured Riemann surface $(X,D)$.
\end{corollary}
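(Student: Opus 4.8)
The plan is to follow verbatim the strategy of Corollary \ref{cor:torelliHiggs}, with the Hodge moduli space in place of the Higgs moduli space and the Hodge analogues of the auxiliary results substituted for their Higgs counterparts. The goal is to exhibit the image $i(\SM(r,\alpha,\xi))$ of the embedding \eqref{eq:embeddingHodge} as a subset of $\SM_{\op{Hod}}(r,\alpha,\xi)$ that is singled out purely by the complex analytic structure, and then to read off $(X,D)$ from it.

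First I would consider closed analytic subsets $Z\subset \SM_{\op{Hod}}(r,\alpha,\xi)$ enjoying the two properties
\begin{enumerate}
\item $Z$ is irreducible, smooth, and of complex dimension $(r^2-1)(g-1)+\frac{n(r^2-r)}{2}$;
\item the restriction of the holomorphic tangent bundle $T\SM_{\op{Hod}}(r,\alpha,\xi)$ to $Z$ admits no nonzero holomorphic section.
\end{enumerate}
By Corollary \ref{cor:TangentHodgeSections}, which for a generic weight system $\alpha$ also guarantees that $\SM_{\op{Hod}}(r,\alpha,\xi)$ is smooth so that $T\SM_{\op{Hod}}(r,\alpha,\xi)$ is an honest vector bundle, the subset $i(\SM(r,\alpha,\xi))$ satisfies both (1) and (2). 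The crux is to prove that no other $Z$ does.

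To establish this uniqueness I would argue as in the Higgs case. The $\CC^*$-action \eqref{eq:actionHodge} on $\SM_{\op{Hod}}(r,\alpha,\xi)$ produces a holomorphic vector field; its restriction to $Z$ is a holomorphic section of $T\SM_{\op{Hod}}(r,\alpha,\xi)|_Z$ and hence vanishes by property (2), so the vector field is identically zero along $Z$. Consequently the stabilizer of each point of $Z$ has one-dimensional tangent space at $1\in\CC^*$ and must therefore be all of $\CC^*$, placing $Z$ inside the fixed point locus of \eqref{eq:actionHodge}. Proposition \ref{prop:SimpsonHodge} bounds the dimension of every irreducible component of this locus by $(r^2-1)(g-1)+\frac{n(r^2-r)}{2}$, with equality attained only on $i(\SM(r,\alpha,\xi))$; combined with property (1) this forces $Z=i(\SM(r,\alpha,\xi))$.

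Because properties (1) and (2) are phrased solely in terms of the complex analytic structure and its tangent bundle, they determine $i(\SM(r,\alpha,\xi))$ intrinsically, so any biholomorphism between two parabolic Hodge moduli spaces must carry one copy of this subset onto the other; hence the biholomorphism class of $\SM_{\op{Hod}}(r,\alpha,\xi)$ determines that of $\SM(r,\alpha,\xi)$. Finally, applying \cite[Theorem 3.2]{TorelliParabolic} --- which is legitimate because $r=2$, $\deg\xi$ is coprime with $r$, and $\alpha$ is concentrated and generic, so that Proposition \ref{prop:concentrated} identifies parabolic stability with stability of the underlying bundle --- recovers the punctured Riemann surface $(X,D)$ from $\SM(r,\alpha,\xi)$. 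I do not anticipate a genuine obstacle: all the analytic substance has already been front-loaded into Proposition \ref{prop:SimpsonHodge} and Corollary \ref{cor:TangentHodgeSections}, and the only point requiring care is the routine verification that conditions (1) and (2) are biholomorphism-invariant, so that the subset they single out is transported by any isomorphism of Hodge moduli spaces.
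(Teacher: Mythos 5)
Your proposal is correct and takes essentially the same approach as the paper: characterize $i(\SM(r,\alpha,\xi))$ inside $\SM_{\op{Hod}}(r,\alpha,\xi)$ as the unique closed analytic subset of dimension $(r^2-1)(g-1)+\frac{n(r^2-r)}{2}$ on which all holomorphic vector fields (in particular the one generated by the $\CC^*$-action \eqref{eq:actionHodge}) vanish, using Corollary \ref{cor:TangentHodgeSections} and Proposition \ref{prop:SimpsonHodge}, and then apply \cite[Theorem 3.2]{TorelliParabolic}. The only, harmless, deviation is that you impose smoothness on $Z$ as in the Higgs case, whereas the paper drops it and only requires the no-sections condition on the smooth locus $Z^{sm}$, concluding via the closure of $Z^{sm}$ in the fixed-point locus; both versions of the characterization are biholomorphism-invariant and single out the same subset.
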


\begin{proof}
We will proceed similarly to the proof of Corollary \ref{cor:torelliHiggs}. Let $Z\subset \SM_{\op{Hod}}(r,\alpha,\xi)$ be a closed analytic subset with the following two properties:

\begin{enumerate}
\item $Z$ is irreducible  and has complex dimension $(r^2-1)(g-1)+ \frac{n(r^2-r)}{2}$.
\item The restriction of the holomorphic tangent bundle $T\SM_{\op{Hod}}(r,\alpha,\xi)$ to the subspace $Z^{sm}\subset \SM_{\op{Hod}}(r,\alpha,\xi)$ has no nonzero holomorphic sections.
\end{enumerate}

By Corollary \ref{cor:TangentHodgeSections}, $i(\SM(r,\alpha,\xi))\subset \SM_{\op{Hod}}(r,\alpha,\xi)$ of the embedding \eqref{eq:embeddingHodge} has these properties. We will prove that this is the only possible choice for $Z$.

Every $\CC^*$ action on $\SM_{\op{Hod}}(r,\alpha,\xi)$ defines a holomorphic vector field. The second assumption on $Z$ says that any such holomorphic vector field vanishes on $Z^{sm}$. Therefore, the stabilizer of each point $Z^{sm}\subset \SM_{\op{Hod}}(r,\alpha,\xi)$ has nontrivial tangent space at $1\in \CC^*$, and hence the stabilizer must be the full group $\CC^*$.

Then $Z^{sm}$ belongs to the fixed point locus of the action \eqref{eq:actionHodge} in $\SM_{\op{Hod}}(r,\alpha,\xi)$, and thus, so does its closure in $\SM_{\op{Hod}}(r,\alpha,\xi)$, $Z$. Due to Proposition \ref{prop:SimpsonHodge}, and property (1), $Z=i(\SM(r,\alpha,\xi))$. In particular, we have $Z\cong \SM(r,\alpha,\xi)$.

Then, as in the proof of Corollary \ref{cor:torelliHiggs}, the isomorphism class of $\SM_{\op{Higgs}}(r,\alpha,\xi)$ determines the isomorphism class of $\SM(r,\alpha,\xi)$. Due to \cite[Theorem 3.2]{TorelliParabolic}, the latter determines the isomorphism class of the punctured Riemann surface $(X,D)$.
\end{proof}

The rank two, coprimality and concentrated weights conditions of the previous corollary are only necessary in order to apply the Torelli theorem in \cite{TorelliParabolic}. If the Theorem of \cite{TorelliParabolic} were extended to higher rank or generic weights, then Corollary \ref{cor:torelliHodge} would also hold for higher rank and generic weights respectively with the same proof given above.

\section{The parabolic Deligne-Hitchin moduli space}
\label{section:ParDH}

We can extend Deligne's construction \cite{De} of the Deligne-Hitchin moduli space for the group $\op{SL}(r,\CC)$ as described in \cite[\S4]{BGHL} to the parabolic scenario. Let $\alpha$ be a full flag generic system of weights over $D$ such that for every $x\in D$,
\begin{equation}
\label{eq:integerParabolicWeights}
\beta(x):=\sum_{i=1}^r \alpha_i(x) \in \ZZ
\end{equation}

Let $X_\RR$ be the $C^\infty$ real manifold of dimension two underlying $X$. Fix a point $x_0\in X_\RR \backslash D$. For every $x\in D$, let $\gamma_x\in \pi_1(X_\RR\backslash D,x_0)$ be the class of a positively oriented simple loop around $x$. Let $\SM_{\op{rep}}(X_\RR, r,\alpha)$ be the subvariety of $\Hom(\pi_1(X_\RR \backslash D, x_0), \op{SL}(r,\CC)) \gitq \op{SL}(r,\CC)$ corresponding to classes of irreducible representations $\rho : \pi_1(X_\RR\backslash D, x_0) \longrightarrow \op{SL}(r,\CC)$ such that for each $x\in D$, $\rho(\gamma_x)$ has eigenvalues $\{e^{-2\pi i\alpha_i(x)}\}$. The group $\op{SL}(r,\CC)$ acts on $\Hom(\pi_1(X_\RR \backslash D, x_0), \op{SL}(r,\CC))$ through the adjoint action of $\op{SL}(r,\CC)$ on itself. Since the eigenvalues of $\rho(\gamma_x)$ are preserved by conjugation, the quotient is well defined. On the other hand, the determinant of $\rho(\gamma_x)$ is the product of its eigenvalues, so

$$\det(\rho(\gamma_x))=\prod_{i=1}^r e^{-2\pi i \alpha_i(x)}= e^{-2\pi i \sum_{i=1}^r \alpha_i(x)}=1$$

The fundamental groups for different base points are identified up to an inner automorphism and the different choices of the loops $\gamma_x$ are identified through an outer isomorphism. Thus, the isomorphism class of the space $\SM_{\op{rep}}(X_\RR, r, \alpha)$ is independent of the choice of $x_0$ and the loops $\gamma_x$, so we can omit any reference to both of them.

A Riemann-Hilbert-like correspondence can be defined between the moduli space of stable parabolic connections of parabolic degree 0 and the moduli space of stable filtered local systems \cite{SimpsonNonCompact} of degree 0 for the group $\GL(n,\CC)$. Simpson proved that this correspondence is a functor compatible with determinants \cite[Lemma 3.2]{SimpsonNonCompact}. This implies that there exist a line bundle $\xi$ over $X$ such that there exist a biholomorphic isomorphism
\begin{equation}
\label{eq:riemann-hilbert}
\SM_{\op{rep}}(X_\RR, r, \alpha) \stackrel{\sim}{\longrightarrow} \SM_{\op{conn}}(r,\alpha,\xi)
\end{equation}

Let $\SM_{\op{conn}}(r,\alpha)$ denote the moduli space of stable parabolic connections of rank $r$, parabolic weights $\alpha$ and parabolic degree 0 over $X$ and let $\SM_{\op{rep}}(X_\RR,\alpha,\GL(r,\CC))$ denote the space of classes of irreducible representations of the fundamental group of $X_\RR \backslash D$ in $\GL(r,\CC)$ with weights $\alpha$. Then the following diagram commutes

\begin{eqnarray}
\label{eq:RHdet}
\xymatrixcolsep{3pc}
\xymatrix{
\SM_{\op{conn}}(r,\alpha) \ar[d]_{\det} & \SM_{\op{rep}}(X_\RR, \alpha, \GL(r,\CC)) \ar[l]_-{\op{RH}} \ar[d]_{\det}\\
\SM_{\op{conn}}(1,\beta) & \SM_{\op{rep}}(X_\RR, \beta, \CC^*) \ar[l]_-{\op{RH}}
}
\end{eqnarray}

$\SM_{\op{rep}}(X_\RR,r,\alpha)\subset \SM_{\op{rep}}(X_\RR,\alpha,\GL(r,\CC))$ is the fiber of the determinant over the equivalence class of the constant representation $[\rho]\in\SM_{\op{rep}}(X_\RR,\beta,\CC^*)$ given by $\rho(\gamma)=1$ for all $\gamma\in\pi_1(X_\RR \backslash D,x_0)$.

By construction, the image of the constant representation by the Riemann-Hilbert correspondence for weight $0$ is simply the trivial line bundle over $X$, i.e., $\SO_X$, together with the trivial connection. Thus, the image of weight $\beta$ corresponds to the line bundle
$$
\xi:=\SO_X \left (\sum_{x\in D} \beta(x) x \right)
$$
with the trivial parabolic structure of weight $\beta$ and the connection $\nabla_{\xi,\beta}$.

The Riemann-Hilbert correspondence is bijective. Thus, by the commutativity of diagram \eqref{eq:RHdet}, it sends the fiber of the determinant over $[\rho]$ to the fiber of the determinant in $\SM_{\op{conn}}(r,\alpha)$ over $(\xi,\xi_\bullet,\nabla_{\xi,\beta})$. By definition \eqref{def:parabolicLambdaConnection}, this fiber is precisely $\SM_{\op{conn}}(r,\alpha,\xi)$. Therefore, the restriction of the Riemann-Hilbert correspondence to $\SM_{\op{rep}}(X_\RR,r,\alpha)$ induces the biholomorphic isomorphism \eqref{eq:riemann-hilbert}.

The isomorphism sends each representation $\rho : \pi_1(X_\RR\backslash D, x_0) \longrightarrow \op{SL}(r,\CC)$ to an associated parabolic vector bundle $(E_X(\rho),E_{X,\bullet}(\rho))$ over $X$ with weight system $\alpha$, endowed with a parabolic connection $\nabla_X(\rho)$.

Composing the isomorphism \eqref{eq:riemann-hilbert} with the action of $\CC^*$ in the moduli space of parabolic $\lambda$-connections given by \eqref{eq:actionHodge} gives us an embedding of $\SM_{\op{rep}}(X_\RR,\alpha) \hookrightarrow \op{pr}_\lambda^{-1}(\lambda)$ for every $\lambda\in \CC^*$. This defines a holomorphic open embedding
\begin{equation}
\label{eq:repEmbedding}
\CC^* \times \SM_{\op{rep}}(X_\RR, r, \alpha) \hookrightarrow  \SM_{\op{Hod}}(X,r,\alpha,\xi)
\end{equation}
onto the open locus $\op{pr}_\lambda^{-1}(\CC^*) \subset \SM_{\op{Hod}}(X,r,\alpha,\xi)$.

Let $J_X$ denote the almost complex structure of the Riemann surface $X$. Then $-J_X$ is also an almost complex structure on $X_\RR$. The Riemann surface defined by $-J_X$ will be denoted by $\overline{X}$. Similarly, let $\overline{\xi}$ be the vector bundle obtained with the conjugate almost complex structure of $\xi$. As a topological vector bundle it is isomorphic to $\xi^{-1}$.

We can also consider the moduli space $\SM_{\op{Hod}}(\overline{X},r,-\alpha,\overline{\xi})$ of parabolic $\lambda$-connections on $\overline{X}$, etc. Now, we define the parabolic Deligne-Hitchin moduli space
$$\SM_{\op{DH}}(X,r,\alpha) := \SM_{\op{Hod}}(X,r,\alpha,\xi) \cup \SM_{\op{Hod}}(\overline{X},r,-\alpha,\overline{\xi})
$$
by gluing $\SM_{\op{Hod}}(X,r,\alpha,\xi)$ to $\SM_{\op{Hod}}(\overline{X},r,-\alpha,\overline{\xi})$ along the image of $\CC^*\times \SM_{\op{rep}}(X_\RR,r,\alpha) \cong \CC^* \times \SM_{\op{rep}}(\overline{X_{\RR}},r,-\alpha)$ for the map in \eqref{eq:repEmbedding}. More precisely, we identify, for each $\lambda \in \CC^*$ and each representation $\rho\in \SM_{re\op{rep}}(X_\RR,r,\alpha)$, the two points
\begin{multline*}
(\lambda,E_X(\rho),E_{X,\bullet}(\rho),\lambda \cdot \nabla_X(\rho))\in \SM_{\op{Hod}}(X,r,\alpha,\xi)\\
 \text{and } (\lambda^{-1},E_{\overline{X}}(\rho),E_{\overline{X},\bullet}(\rho),\lambda^{-1} \cdot \nabla_{\overline{X}}(\rho))\in \SM_{\op{Hod}}(\overline{X},r,-\alpha,\overline{\xi})
\end{multline*}
The forgetful map $\op{pr}_\lambda$ in \eqref{eq:forgetfulHodge} extends to a natural holomorphic morphism
\begin{equation}
\label{eq:forgetfulDH}
\op{pr} :\SM_{\op{DH}}(X,r,\alpha) \longrightarrow \CC\PP^1
\end{equation}
whose fiber over $\lambda\in \CC\PP^1$ is canonically biholomorphic to

\begin{itemize}
\item the moduli space $\SM_{\op{Higgs}}(X,r,\alpha,\xi)$ of parabolic $\op{SL}(r,\CC)$ Higgs bundles on $X$ of weight system $\alpha$ and $\det(E) \cong \xi$ if $\lambda=0$
\item the moduli space $\SM_{\op{Higgs}}(\overline{X},r,-\alpha,\overline{\xi})$ of parabolic $\op{SL}(r,\CC)$ Higgs bundles on $\overline{X}$ of weight system $-\alpha$ and $\det(E) \cong \overline{\xi}$ if $\lambda=\infty$
\item the moduli space of parabolic $\lambda$-connections on $X$ of weight system $\alpha$ and $\det(E)\cong \xi$ for every fixed $\lambda \ne 0$ and $\lambda \ne \infty$. This fibers are also biholomorphic to the moduli space $\SM_{\op{rep}}(X_\RR, r, \alpha)$ of equivalence classes of representations $[\rho] \in \Hom(\pi_1(X_\RR \backslash D, x_0), \op{SL}(r,\CC)) \gitq \op{SL}(r,\CC)$ such that for some fixed loops $\gamma_x\in \pi_1(X_\RR \backslash D, x_0)$ around the points $x\in D$, $\rho(\gamma_x)$ has eigenvalues $\{e^{-2\pi i\alpha_i(x)}\}$.
\end{itemize}

Now we can prove the main result.

\begin{theorem}
\label{teo:TorelliParabolicDH}
Let $r=2$.  Let $X$ be a smooth complex projective curve of genus $g\ge 3$. Let $D$ be a finite set of $n\ge 1$ distinct points over $X$ and let $\alpha$ be a concentrated generic (in particular full flag) system of weights over $D$ such that for every $x\in D$,
$$\beta(x)=\sum_{i=1}^r \alpha_i(x)\in \ZZ$$
and $\sum_{x\in D}\beta(x)$ is coprime with $r$. The isomorphism class of the complex analytic space $\SM_{\op{DH}}(X,r,\alpha)$ determines uniquely the isomorphism class of the unordered pair of punctured Riemann surfaces $\{(X,D),(\overline{X},D)\}$.
\end{theorem}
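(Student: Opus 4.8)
The plan is to adapt, essentially verbatim, the argument of Corollary \ref{cor:torelliHodge} to the glued space $\SM_{\op{DH}}(X,r,\alpha)$. I would consider closed analytic subsets $Z\subset \SM_{\op{DH}}(X,r,\alpha)$ satisfying two intrinsic properties: (1) $Z$ is irreducible of complex dimension $(r^2-1)(g-1)+\frac{n(r^2-r)}{2}$; and (2) the restriction of $T\SM_{\op{DH}}(X,r,\alpha)$ to the smooth locus $Z^{sm}$ admits no nonzero holomorphic section. The two natural candidates are the images $i(\SM(X,r,\alpha,\xi))$ and $i(\SM(\overline{X},r,-\alpha,\overline{\xi}))$ of the embeddings \eqref{eq:embeddingHodge} into the two Hodge charts. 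Each Hodge moduli space is an open subset of $\SM_{\op{DH}}(X,r,\alpha)$ for the map \eqref{eq:forgetfulDH} (namely $\op{pr}^{-1}(\CC)$ and $\op{pr}^{-1}(\CC\PP^1\setminus\{0\})$), so the restriction of $T\SM_{\op{DH}}$ to each candidate agrees with the restriction of the corresponding $T\SM_{\op{Hod}}$; property (2) for both then follows from Corollary \ref{cor:TangentHodgeSections} applied to $X$ and to $\overline{X}$, while property (1) is clear. The goal is to show these are the only such $Z$.

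The key structural input is a $\CC^*$ action on $\SM_{\op{DH}}(X,r,\alpha)$ covering the standard action $\lambda\mapsto t\lambda$ on $\CC\PP^1$. First I would verify that the action \eqref{eq:actionHodge} on the $X$-chart glues with the analogous action on the $\overline{X}$-chart: on the overlap $\op{pr}^{-1}(\CC^*)$, expressed through the monodromy representation $\rho$, the action \eqref{eq:actionHodge} reads $(\lambda,\rho)\mapsto(t\lambda,\rho)$, which in the coordinate $\mu=\lambda^{-1}$ of the $\overline{X}$-chart is $\mu\mapsto t^{-1}\mu$, i.e. the standard Hodge action of $\overline{X}$ run with parameter $t^{-1}$; this extends holomorphically across $\mu=0$. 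Consequently the only fixed points of this action lie over $\lambda=0$ and $\lambda=\infty$, where it restricts to the Higgs-field scaling \eqref{eq:actionHiggs} on $\SM_{\op{Higgs}}(X,r,\alpha,\xi)$ and to its inverse on $\SM_{\op{Higgs}}(\overline{X},r,-\alpha,\overline{\xi})$ respectively, both having the same fixed locus.

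Property (2) then forces every holomorphic vector field on $\SM_{\op{DH}}(X,r,\alpha)$, in particular the one generated by this $\CC^*$ action, to vanish on $Z^{sm}$; hence the stabilizer of each point of $Z^{sm}$ has nontrivial tangent space at $1\in\CC^*$ and must be all of $\CC^*$, so $Z$ lies in the fixed-point locus. Since that locus sits in the two disjoint fibers over $0$ and $\infty$ and $Z$ is irreducible, $Z$ lies entirely in one of them; applying Proposition \ref{prop:Simpson} to $X$ (over $\lambda=0$) or to $\overline{X}$ (over $\lambda=\infty$) together with the dimension equality in property (1), I conclude that $Z$ equals $i(\SM(X,r,\alpha,\xi))$ or $i(\SM(\overline{X},r,-\alpha,\overline{\xi}))$. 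Thus the isomorphism class of $\SM_{\op{DH}}(X,r,\alpha)$ determines intrinsically the unordered pair of isomorphism classes of $\SM(X,r,\alpha,\xi)$ and $\SM(\overline{X},r,-\alpha,\overline{\xi})$ — unordered because nothing intrinsic distinguishes the fibers over $0$ and $\infty$. Since $\deg(\xi)=\sum_{x\in D}\beta(x)$ is coprime to $r=2$, the parabolic Torelli theorem \cite[Theorem 3.2]{TorelliParabolic} recovers $(X,D)$ from the first and $(\overline{X},D)$ from the second, yielding the unordered pair $\{(X,D),(\overline{X},D)\}$.

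I expect the principal obstacle to be the content of the second paragraph: checking that the two chart-wise actions genuinely glue to a single holomorphic $\CC^*$ action on $\SM_{\op{DH}}(X,r,\alpha)$ across the Riemann--Hilbert identification, and that its fixed locus is exactly the union of the two Higgs fixed loci over $0$ and $\infty$. Once this is established, the remaining steps reduce to the already-proven arguments of Corollaries \ref{cor:TangentHodgeSections} and \ref{cor:torelliHodge} together with Proposition \ref{prop:Simpson}.
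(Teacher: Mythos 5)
Your proposal is correct and follows essentially the same route as the paper's own proof: you glue the $\CC^*$ action \eqref{eq:actionHodge} across the Riemann--Hilbert overlap (the action of $t$ on the $X$-chart matching the action of $1/t$ on the $\overline{X}$-chart), locate the fixed locus in the two fibers over $0$ and $\infty$, and then rerun the argument of Corollary \ref{cor:torelliHodge} to identify $Z$ with one of the two embedded moduli spaces before invoking \cite[Theorem 3.2]{TorelliParabolic}. The only cosmetic differences are that you apply Proposition \ref{prop:Simpson} fiberwise where the paper packages the same content as Proposition \ref{prop:SimpsonHodge}, and that you spell out details (e.g.\ irreducibility forcing $Z$ into a single fiber) that the paper leaves implicit.
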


\begin{proof}
As $\alpha$ is full flag and generic, $\SM_{\op{DH}}(X,r,\alpha)$ is smooth. Let
$$T\SM_{\op{DH}}(X,r,\alpha) \longrightarrow \SM_{\op{DH}}(X,r,\alpha)
$$
be its holomorphic tangent bundle. Since $\SM_{\op{Hod}}(X,r,\alpha,\xi)$ is open in $\SM_{\op{DH}}(X,r,\alpha)$, Corollary \ref{cor:TangentHodgeSections} implies that the restriction of $T\SM_{\op{DH}}(X,r,\alpha)$ to
$$i(\SM(X,r,\alpha,\xi)) \subset \SM_{\op{Hod}}(X,r,\alpha,\xi) \subset \SM_{\op{DH}}(X,r,\alpha)
$$
does not admit any nonzero holomorphic section. The same argument applies if we replace $X$ by $\overline{X}$. Since $\SM_{\op{Hod}}(\overline{X},r,-\alpha,\overline{\xi})$ is also open in $\SM_{\op{DH}}(X,r,\alpha)$, the restriction of $T\SM_{\op{DH}}(X,r,\alpha)$ to
$$i(\SM(\overline{X},r,-\alpha,\overline{\xi})) \subset \SM_{\op{Hod}}(\overline{X},r,-\alpha,\overline{\xi}) \subset \SM_{\op{DH}}(X,r,\alpha)
$$
does not admit any nonzero holomorphic section either. We will extend the $\CC^*$ action on $\SM_{\op{Hod}}(X,r,\alpha,\xi)$ in \eqref{eq:actionHodge} to $\SM_{\op{DH}}(X,r,\alpha)$. We consider the corresponding $\CC^*$ action on $\SM_{\op{Hod}}(\overline{X},r,\alpha,\xi)$. The action of any $t\in \CC^*$ on the open subset $\CC^* \times \SM_{\op{rep}}(X_\RR, r, \alpha) \longrightarrow \SM_{\op{Hod}}(X,r,\alpha,\xi)$ in \eqref{eq:repEmbedding} coincides with the action of $1/t$ on $\CC^* \times \SM_{\op{rep}}(X_\RR, r, \alpha) \longrightarrow \SM_{\op{Hod}}(\overline{X},r,-\alpha,\overline{\xi})$. Therefore, we get an action of $\CC^*$ on $\SM_{\op{DH}}(X,r,\alpha)$.

Due to Proposition \ref{prop:SimpsonHodge}, each irreducible component of the fixed point locus of this $\CC^*$ action on $\SM_{\op{DH}}(X,r,\alpha)$ has dimension less or equal to $(r^2-1)(g-1)+ \frac{n(r^2-r)}{2}$, with equality only for $i(\SM(X,r,\alpha,\xi))$ and for $i(\SM(\overline{X},r,-\alpha,\overline{\xi}))$.

In a similar way of the proof of Corollary \ref{cor:torelliHodge}, these observations imply that $\SM_{\op{DH}}(X,r,\alpha)$ determines the isomorphism class of the unordered pair of moduli spaces $\{\SM(X,r,\alpha,\xi),\SM(\overline{X},r,-\alpha,\overline{\xi})\}$. Therefore, using \cite[Theorem 3.2]{TorelliParabolic} the statement of the theorem follows.
\end{proof}

The rank two condition of the previous theorem  is only necessary in order to apply the Torelli theorem in \cite{TorelliParabolic}. If the Theorem of \cite{TorelliParabolic} were extended to higher rank, then Theorem \ref{teo:TorelliParabolicDH} would also hold for higher rank with the same proof given above.

\bibliographystyle{alpha}
\bibliography{torelliDeligneBib}

\end{document}